\newtheorem{theorem}{Theorem}[section]
\newtheorem{proposition}{Proposition}[section]
\newtheorem{corollary}[theorem]{Corollary}
\newtheorem{lemma}[theorem]{Lemma}
\theoremstyle{definition}
\newtheorem{example}{Example}[theorem]
\newtheorem{remark}[theorem]{Remark}
\newtheorem{definition}[theorem]{Definition}
\newcommand{\C}{\mathbb{C}}
\newcommand{\R}{\mathbb{R}}
\newcommand{\mc}{\mathcal}
\newcommand{\N}{\mathbb{N}}
\DeclareMathOperator{\diag}{diag}
\DeclareMathOperator{\lcm}{lcm}
\DeclareMathOperator{\mult}{mult}
\DeclareMathOperator{\sspan}{span}
\newcommand{\trans}{^\top}
\newcommand{\simP}{\stackrel P\sim}
\DeclareMathOperator{\P3}
\newcommand{\npmatrix}[1]{\left( \begin{matrix} #1 \end{matrix} \right)}
\title{Arbitrarily Finely Divisible Matrices}
\begin{document}

\maketitle

\begin{center}
Priyanka Joshi\textsuperscript{1}, Helena \v Smigoc\textsuperscript{1} \par \bigskip
 \textsuperscript{1}School of Mathematics and Statistics, University College Dublin, Belfield, Dublin 4, Ireland \par
\end{center}

\begin{abstract}
The class of stochastic matrices that have a stochastic $c$-th root for infinitely many natural numbers $c$ is introduced and studied. Such matrices are called arbitrarily finely divisible, and generalise the class of infinitely divisible matrices. In particular, if $A$ is a transition matrix for a Markov process over some time period, then arbitrarily finely divisibility of $A$ is the necessary and sufficient condition for the existence of transition matrices corresponding to this Markov process over arbitrarily short periods.  

In this paper, we lay the foundation for research into arbitrarily finely divisible matrices and demonstrate the concepts using specific examples of $2 \times 2$ matrices, $3 \times 3$  circulant matrices, and rank-two matrices.

\smallskip
\noindent \textbf{Keywords.} Matrix root, Stochastic matrix, Embeddable Markov Chain, Infinitely Divisible Matrices. 

\noindent \textbf{MSC.} 15A16, 15B51, 60J10.

\end{abstract}

\section{Introduction}

A discrete-time Markov process can be described by its transition matrix, a stochastic matrix $A$ whose $(i,j)$-th element is the probability of moving from state $i$ to state $j$ over a time interval. In practice, a transition matrix may be estimated for a certain time period $t_0$, however, a
transition matrix for a period shorter than $t_0$ may be needed. If $A$ is a transition matrix for a time period $t_0$, then a stochastic $c$-th root of $A$ is a transition matrix for a time period $\frac{t_0}{c}$. In particular, if a transition matrix $A$ possesses a stochastic $c$-th root for every $c\in \N$, then transition matrices for arbitrarily short time intervals can be found. Such matrices $A$ are called \emph{infinitely divisible}, \cite{vanbrunt}. If in addition $\det(A)>0$ then $A$ is said to be \emph{strongly infinitely divisible}. It was proved by Kingman \cite{kingman} that if a nonsingular stochastic matrix $A$ has a $c$-th root for every $c\in \N$, then $A=e^Q$, where $Q$ has row sums zero and nonnegative off-diagonal entries. 
In this case, $A$ is called \emph{embeddable}, and the matrix $Q$ is said to be the generator of $A$.  Building on Kingman \cite{kingman}, 
 Van Brunt \cite{vanbrunt} established that an $n\times n$ nonnegative matrix $A$ is strongly infinitely divisible if and only if there exists a matrix $Q$ with nonnegative off-diagonal entries such that $A=e^{Q}.$ 

Deciding if a given stochastic matrix is embeddable is known as \emph{embedding problem}. The problem has found many applications across various fields since it was first introduced by Elfving \cite{elfving}. Those include evolutionary biology \cite{jiabio},\cite{bio}, economics \cite{israelgenerator}, and social science \cite{singer}. Motivated by applications, algorithms for computing matrix roots have been developed \cite{MR2134331,MR2262981,MR4296928}, with some specifically tailored for stochastic matrices  \cite{MR4734557}. 

In this work, we introduce the class of \emph{arbitrarily finely divisible} stochastic matrices: stochastic matrices that have a stochastic $c$-th root for infinitely many $c \in \N$. While this is a weaker condition than infinite divisibility, it still implies the existence of transition matrices for arbitrarily short time intervals in the context of Markov chains.

Our work builds on extensive literature on stochastic $c$-th roots of stochastic matrices, and the embedding problem  in particular.  An excellent overview of the literature on stochastic roots of stochastic matrices and their connection to Markov chains is provided in \cite{highamlin}. Specifically, roots of $2\times 2$ and $3\times 3$ stochastic matrices are investigated in \cite{hegunn}. The $3\times 3$ case is also studied in \cite{guerry1} and \cite{guerry2}. In \cite{highamlin} it is shown that the stochastic matrices that are inverses of $M$-matrices and a specific family of symmetric positive semi-definite stochastic matrices have a $c$-th stochastic root for every $c\in \N$. Following \cite{highamlin}, some conditions under which positive semi-definite doubly stochastic matrices have a $c$-th root for every $c\in \N$ were given in \cite{nader}.

For an arbitrarily finely divisible matrix $A$, the set of its stochastic roots necessarily contains an accumulation point that shares some spectral properties with $A$. In order to understand this connection, we study the problem over the complex numbers in Section \ref{sec:complexlimit}, after we establish the notation used in this work in Section \ref{sec:background}. In Section \ref{sec:L_+(A)} we focus our attention on stochastic matrices, where the set of possible accumulation points, at least in the nonsingular case, can be effectively described. The theory developed in Sections  \ref{sec:complexlimit} and \ref{sec:L_+(A)} is applied to specific cases in the following sections. We revisit the case of $2 \times 2$ matrices in Section \ref{sec:2x2}, where the small dimension allows us to explicitly illustrate all the concepts developed earlier. If a nonsingular stochastic matrix has an irreducible accumulation point, then it has to be a circulant matrix. This observation serves as our motivation to study $3 \times 3$ circulant matrices in Section \ref{sec:3x3}. Finally, we characterise the set of arbitrarily finely divisible rank two matrices in Section \ref{sec:rank2}.

\section{Notation}\label{sec:background}

 We denote the set of natural numbers by $\N$ and for $a,b \in\N$ define $a\N+b=\{a k+b:k\in \N\}.$ The set $\N_0$ denotes the natural numbers including zero, i.e. $\N_0=\N \cup \{0\}.$ $\R$ denotes the set of all real numbers, and  $\C$ the set of all complex numbers. The modulus of $\lambda \in \C$ is denoted by $|\lambda|$, and for $c \in \N$ the set of $c$ distinct complex roots of $\lambda$ is denoted by $\{\lambda\}^{\frac{1}{c}}$. For $\lambda >0$, $\lambda^{\frac{1}{c}}$ denotes the positive element of $\{\lambda\}^{\frac{1}{c}}$. The cardinality of a set $\mc S$ is denoted by $|\mc S|$. For $n \in \N$, we define the set $$\mc {CP}(n):=\{x \in \N; \gcd(x,n)=1\}.$$

 We consider $\R^n$ and $\R_+^n$ to be the set of $n$-dimensional real vectors and nonnegative vectors, respectively.  $\mc M_n(\C)$ and $\mc M_n(\R)$ denote the set of $n\times n$ matrices with complex and real entries, respectively.
 The notation $\mc S_n^+$ represents the set of $n\times n$ stochastic matrices (nonnegative matrices with each row sum equal to $1$). The notation $\diag \npmatrix{d_1 & \ldots & d_n}$ refers to $n\times n$ diagonal matrix with diagonal entries $d_1,\ldots, d_n$.   $I_n$ denotes the $n\times n$ identity matrix, $0_{n}$ denotes the $n\times n$ zero matrix and $C_n$ denotes the adjacency matrix of a directed cycle of length $n$, where $n\in \N$ and vertices are listed in a natural cycle order. The vector $\mathbf{1}_n$ refers to the $n\times 1$ vector with all entries equal to $1.$ 

 The notation $A\simP B$ indicates that the matrix $A$ is permutationally similar to the matrix $B.$ For a matrix $M$,  $\sspan(M)$ denotes the vector space spanned by the columns of the matrix $M.$

The spectrum of a matrix $A$, which is the multiset of its eigenvalues listed with repetitions, is denoted by $\sigma(A)$. We use the notation $\sigma_*(A)$ to denote the set of distinct nonzero eigenvalues of $A$ (in this set the eigenvalues are not listed with repetitions). In addition, we use $\mult(\lambda, A)$ to denote the algebraic multiplicity of  $\lambda$ as the eigenvalue of $A$. 
The generalised eigenspace of $A$ corresponding to the eigenvalue $\lambda$ is represented by $\mc E(\lambda, A)$. The Jordan Canonical Form (JCF) of $A\in \mc M_n(\C)$ is denoted by $J(A)=J_*(A)\oplus N(A),$ where $N(A)$ collects together all the Jordan blocks corresponding to eigenvalue $0$, $J_*(A)=\oplus_{\lambda \in \sigma_*(A)}J(\lambda,A)$ and $J(\lambda,A)$ contains all the Jordan blocks for $\lambda\in \sigma_*(A)$. The multiset of block sizes in the JCF of $A$ corresponding to the eigenvalue $\lambda$ is denoted by $\mc B_{JCF}(\lambda,A)$, i.e. ${\mc B}_{JCF}(\lambda,A)=\{b_1,\ldots,b_k\}$, where $b_1,\ldots,b_k$ are block sizes in $J(\lambda,A)$.

A partition of a multiset is defined as a division of the multiset into non-overlapping sub-multisets whose union is the original multiset. For a multiset $\mc B$ of numbers, $\Sigma(\mc B)$ represents the sum of all elements in $\mc B.$

Having established predominantly standard notation, we proceed to define concepts that are specific to this work. For $\mc S \subseteq \mc M_n(\C)$ and $A \in \mc S$ we define: 
\begin{align*}
    \mc P_{\mc S}(A)&:=\{c\in \N: B^c=A\text{ for some } B\in \mc S \}\\
    \mc R_{\mc S}(A)&:=\{B: B^c=A, B\in \mc S, c \in \N\}.
\end{align*}
We use the following simplified notation when $\mc S$ is the set of complex square matrices, real square matrices, or stochastic matrices:
 \begin{align*}
  \mc P_{\C}(A)&:=\mc P_{\mc M_n(\C)}(A), & \mc P_{\R}(A)&:=\mc P_{\mc M_n(\R)}(A),  & \mc P_+(A)&:=\mc P_{\mc S_n^+}(A),\\
  \mc R_{\C}(A)&:=\mc R_{\mc M_n(\C)}(A), &\mc R_{\R}(A)&:=\mc R_{\mc M_n(\R)}(A),  &\mc R_+(A)&:=\mc R_{\mc S_n^+}(A)
 \end{align*}

 In our notation, a stochastic matrix is infinitely divisible, if  $\mc P_+(A)=\N$. In this work, we investigate a more general class of matrices with $|\mc P_+(A)|=\infty$.

\begin{definition}\label{def:limitL}
 Let $\mc S\subseteq \mc M_n(\C)$ and $A \in \mc S$. We say that $L$ is \emph{a limit of $\mc S$-roots of $A$} if there exists a strictly increasing sequence $(c_i)_{i=1}^{\infty} \subseteq \mc P_{\mc S}(A)$, and a convergent sequence $(B_i)_{i=1}^{\infty} \subseteq \mc R_{\mc S}(A)$ that satisfies $B_i^{c_i}=A$ and $\lim_{i \rightarrow \infty} B_i=L$. We define $\mc L_{\mc S}(A)$ to be the set of all limits of $\mc S$-roots of $A$. 
\end{definition}

As above, we use notation: 
\begin{align*}
\mc L_{\C}(A):=\mc L_{\mc M_n(\C)}(A), \,\, \mc L_{\R}(A)=\mc L_{\mc M_n(\R)}(A), \,\, \mc L_{+}(A)=\mc L_{\mc S_n^+}(A).
\end{align*}

\section{Properties of $L \in \mc L_{\C}(A)$}\label{sec:complexlimit}

In this section we aim to highlight the connections between $A \in \mc M_n(\C)$ and $L \in \mc L_{\C}(A)$.

The set $\mc R_{\C}(A)$ is well understood. In particular, $p$-th roots of matrices can be understood through their Jordan canonical form. The following result from \cite{highamlin} details the structure of matrix $p$-th roots for nonsingular matrices.

\begin{theorem}\cite{highamlin}\label{thm:higham_lin}
Let $A \in \mc M_n(\C)$ be a nonsingular matrix with Jordan canonical form $Z^{-1}AZ = J = \diag(J_1, J_2, \ldots, J_m)$, where each Jordan block $J_k = J_k(\lambda_k) \in \mc M_{m_k}(\C)$ corresponds to the eigenvalue $\lambda_k$. Suppose $s \leq m$ denotes the number of distinct eigenvalues of $A$.

For each $k = 1, 2, \ldots, m$, let $L_k^{(j_k)}(\lambda_k)$ denote the $p$-th roots of $J_k(\lambda_k)$, given by
\begin{equation}\label{eq:block_root}
L_k^{(j_k)}(\lambda_k) = \begin{pmatrix}
f_{j_k}(\lambda_k) & f_{j_k}'(\lambda_k) & \cdots & \frac{f_{j_k}^{(m_k-1)}(\lambda_k)}{(m_k-1)!} \\
& f_{j_k}(\lambda_k) & & \vdots \\
& & \ddots & f_{j_k}'(\lambda_k) \\
& & & f_{j_k}(\lambda_k)
\end{pmatrix},
\end{equation}
where $j_k \in \{1, 2, \ldots, p\}$ denotes the branch of the $p$-th root function $f(z) = \sqrt[p]{z}$.

Then, $A$ has exactly $p^s$ $p$-th roots that can be expressed as polynomials in $A$, given by
$$
X_j = Z \diag(L_1^{j_1}, L_2^{j_2}, \ldots, L_m^{j_m}) Z^{-1}, \quad j = 1, 2, \ldots, p^s,
$$
where $j_1, j_2, \ldots, j_m$ are chosen such that $j_i = j_k$ whenever $\lambda_i = \lambda_k$.

If $s < m$, then $A$ has additional $p$-th roots that form parameterized families
$$
X_j(U) = Z U \diag(L_1^{j_1}, L_2^{j_2}, \ldots, L_m^{j_m}) U^{-1} Z^{-1}, \quad j = p^s + 1, \ldots, p^m,
$$
where $j_k \in \{1, 2, \ldots, p\}$, $U$ is any nonsingular matrix commuting with $J$, and for each $j$, there exist indices $i$ and $k$ such that $\lambda_i = \lambda_k$ but $j_i \neq j_k$.
\end{theorem}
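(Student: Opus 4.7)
The plan is to decompose the problem into computing $p$-th roots of each individual Jordan block $J_k(\lambda_k)$ and then assembling these. First, I would observe that any $p$-th root $X$ of $A$ must commute with $A$, since $X$ commutes with its own power $X^p = A$. After conjugating by $Z$ to pass to the JCF basis, the task reduces to classifying matrices $Y$ satisfying $Y^p = J$. The commutant structure of $J$ will then force $Y$ to respect the Jordan-block decomposition, up to the action of a nonsingular commuting matrix $U$.

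For a single Jordan block $J_k(\lambda_k) = \lambda_k I_{m_k} + N$, where $N$ is the nilpotent shift, I would write the formal Taylor series of each of the $p$ branches of $z^{1/p}$ about $\lambda_k$. Because $\lambda_k \neq 0$ and $N^{m_k} = 0$, the series terminates after $m_k$ terms when evaluated at $N$, yielding exactly the upper-triangular Toeplitz matrix in equation \eqref{eq:block_root}. Since a single Jordan block is nonderogatory, its centralizer consists precisely of polynomials in the block, and one checks that every $p$-th root of $J_k(\lambda_k)$ is itself such a polynomial; hence these $p$ candidates form the complete list of roots of a single block.

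Next, I would assemble block-diagonal roots of $J$ by independently choosing a branch $j_k \in \{1,\ldots,p\}$ for each block, giving $p^m$ raw candidates $\diag(L_1^{(j_1)}, \ldots, L_m^{(j_m)})$. A block-diagonal root is a polynomial in $J$ if and only if $j_i = j_k$ whenever $\lambda_i = \lambda_k$: sufficiency follows from Hermite interpolation on the spectrum, producing a polynomial realising the chosen branch on each eigenvalue; necessity follows because a polynomial in $J$ must act identically on equal eigenvalues. This yields exactly $p^s$ primary roots, which pull back through $Z$ to polynomials in $A$.

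Finally, when $s < m$, I would handle non-primary roots by taking a block-diagonal candidate whose branch choices disagree on some repeated eigenvalue and conjugating it by a nonsingular $U$ commuting with $J$; this produces a new $p$-th root of $J$ and the parameterized families described in the theorem. The main obstacle lies here: showing that every $p$-th root $Y$ of $J$ can be brought to block-diagonal form by some $U$ in the centraliser of $J$. This is essentially a simultaneous-block-diagonalisation argument within each generalised eigenspace $\mc E(\lambda,J)$, using that $Y$ preserves each such subspace (because it commutes with $J$) and that its restriction to $\mc E(\lambda,J)$ is a $p$-th root of the corresponding direct sum of Jordan blocks, which can then be put into canonical form by an inner automorphism of that centraliser.
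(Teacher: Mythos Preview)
The paper does not contain its own proof of this theorem: the statement is quoted verbatim from \cite{highamlin} as background, and no proof environment follows it. There is therefore nothing in the paper to compare your proposal against.

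For what it is worth, your outline is the standard argument (essentially the one in Higham's \emph{Functions of Matrices}, which underlies \cite{highamlin}): reduce to $Y^p = J$ via the JCF, classify roots of a single Jordan block via the truncated Taylor series of each branch of $z^{1/p}$, count primary roots by the constraint that a polynomial in $J$ must use the same branch on repeated eigenvalues, and obtain the non-primary families by conjugating a mixed-branch block-diagonal root by an element of the centraliser of $J$. The step you flag as the main obstacle --- that every $p$-th root of $J$ is conjugate, via some $U$ commuting with $J$, to a block-diagonal one --- is indeed the part requiring the most care; it is handled by observing that $Y$ commutes with $J$, hence preserves each generalised eigenspace, and that on each such subspace $Y$ has a JCF whose block sizes coincide with those of $J$ (since raising to the $p$-th power on a nonsingular block preserves the Jordan structure), so the required $U$ can be assembled eigenspace by eigenspace. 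Your sketch is correct and complete at the level of a proof plan.
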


Also for singular matrices, the $p$-th roots are found by resorting to the Jordan canonical form. In this case, the singular part and the nonsingular part are considered separately.  

\begin{theorem}\cite{highamlin}\label{thm:complex_singular_roots}
    Let $A,B\in \mc M_n(\C)$ and $A=B^p$, $p\in \N$. Let $A$ have the Jordan canonical form $Z^{-1}AZ=J_*(A)\oplus N(A)$, where $N(A)$ collects together all the Jordan blocks corresponding to the eigenvalue $0$ and $J_*(A)$ contains the remaining Jordan blocks.     Then $B=Z\diag(X_0,X_1)Z^{-1},$ where $X_1$ is any $p$-th root of $J_*(A)$, characterized by Theorem \ref{thm:higham_lin}, and $X_0$ is any $p$-th root of $N(A).$
    \end{theorem}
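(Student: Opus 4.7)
The plan is to derive the block-diagonal structure of $B$ from a single observation: any $p$-th root of $A$ must commute with $A$, so it must respect the coarse decomposition of $\C^n$ into the generalised eigenspace for $0$ and the sum of generalised eigenspaces for nonzero eigenvalues.

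First I would note that $BA = B \cdot B^p = B^p \cdot B = AB$, so $B$ and $A$ commute. A standard argument then shows that $B$ preserves every generalised eigenspace of $A$: if $v \in \mc E(\lambda, A)$ then $(A-\lambda I)^n v = 0$, and commutativity gives $(A-\lambda I)^n Bv = B(A-\lambda I)^n v = 0$, so $Bv \in \mc E(\lambda, A)$. In particular, $B$ leaves $V_0 := \mc E(0,A)$ and $V_* := \bigoplus_{\lambda \in \sigma_*(A)} \mc E(\lambda, A)$ invariant. Since the columns of $Z$ corresponding to the blocks $J_*(A)$ and $N(A)$ are precisely bases of $V_*$ and $V_0$, this yields $Z^{-1}BZ = \diag(X_1, X_0)$ in the ordering matching $J_*(A) \oplus N(A)$, where $X_1$ acts on $V_*$ and $X_0$ on $V_0$.

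Next I would check that the two diagonal blocks are $p$-th roots of the corresponding blocks of $A$: expanding $B^p = A$ block by block gives $X_1^p = J_*(A)$ and $X_0^p = N(A)$. Because $J_*(A)$ is nonsingular, $X_1$ is nonsingular and Theorem \ref{thm:higham_lin} applies to describe it. Because $N(A)$ is nilpotent, $X_0^{pk} = N(A)^k = 0$ for $k$ at least the size of the largest nilpotent Jordan block, so $X_0$ is itself nilpotent; thus $X_0$ is an arbitrary nilpotent $p$-th root of $N(A)$.

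The main point to argue carefully is the invariance of $V_0$ and $V_*$ under $B$, which hinges on $B \in \mc R_\C(A)$ commuting with $A$; everything else is then a direct reading of the block structure. No substantial obstacle arises, since Theorem \ref{thm:higham_lin} already classifies the nonsingular block $X_1$ and the nilpotent block $X_0$ is treated as an unconstrained $p$-th root of $N(A)$.
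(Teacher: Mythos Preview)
The paper does not supply a proof of this theorem; it is quoted from \cite{highamlin} as background and used without argument. Your proof is correct and is the standard one: since $B$ commutes with $A=B^p$, it preserves each generalised eigenspace of $A$, in particular the splitting $V_*\oplus V_0$, and the block equations $X_1^p=J_*(A)$, $X_0^p=N(A)$ follow immediately. You also handled correctly the minor inconsistency in the statement, where the ordering $\diag(X_0,X_1)$ does not match $J_*(A)\oplus N(A)$; the intended formula is $B=Z\diag(X_1,X_0)Z^{-1}$.
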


The main focus of our work is to study irreducible stochastic matrices $A$ with $|\mc P_{+}(A)|=\infty$, but first, we introduce some concepts in a more general setting.

\begin{proposition}\label{prop:nilpotentpart0}
Let $A \in \mc M_n(\C)$, $|\mc P_{\C}(A)|=\infty$ and $L \in \mc L_{\C}(A)$. Then $N(A)=0_{n_0}$, where $n_0=\mult(0,A)$, and if $\lambda$ is an eigenvalue of $L$, then either $|\lambda|=1$ or $\lambda=0$. 
\end{proposition}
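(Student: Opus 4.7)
The plan is to treat the two assertions independently. The statement $N(A) = 0_{n_0}$ depends only on $A$ and will follow from $|\mc P_{\C}(A)| = \infty$, whereas the restriction on eigenvalues of $L$ will exploit the defining sequence $B_i \to L$ with $B_i^{c_i} = A$ and $c_i \to \infty$.

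For the first assertion, I would argue by contrapositive. Suppose $A$ has a Jordan block at $0$ of some size $k \geq 2$, and let $B^c = A$ for any $B \in \mc M_n(\C)$. Using the standard fact that a single nilpotent Jordan block $J_m(0)$ satisfies $\dim \ker J_m(0)^{cj} = \min(cj, m)$, one sees that $J_m(0)^c$ has largest Jordan block of size $\lceil m/c \rceil$. Therefore the nilpotent part of $B$ must contain a block of size at least $c(k-1)+1$, and since Jordan block sizes in $B$ are bounded by $n$, we obtain $c \leq (n-1)/(k-1) \leq n-1$. Hence $\mc P_{\C}(A) \subseteq \{1,\dots,n-1\}$ would be finite, contradicting $|\mc P_{\C}(A)| = \infty$. (Alternatively, Theorem \ref{thm:complex_singular_roots} can be invoked to reduce immediately to the nilpotent part of $A$.)

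For the second assertion, fix $\lambda \in \sigma(L)$ and a sequence $(B_i,c_i)$ as in Definition \ref{def:limitL}. By continuity of the roots of the characteristic polynomial, I can pick $\mu_i \in \sigma(B_i)$ with $\mu_i \to \lambda$. Each $\mu_i^{c_i}$ belongs to the finite set $\sigma(A)$, so on a subsequence $\mu_i^{c_i} = \alpha$ for a fixed $\alpha \in \sigma(A)$. Either $\alpha = 0$, which forces $\mu_i = 0$ and hence $\lambda = 0$, or $\alpha \neq 0$, in which case $|\mu_i| = |\alpha|^{1/c_i}$ tends to $1$ as $c_i \to \infty$, giving $|\lambda| = 1$.

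The main technical point is the Jordan-block bound used in the first assertion; the second part reduces in a straightforward way to continuity of eigenvalues combined with the finiteness of $\sigma(A)$.
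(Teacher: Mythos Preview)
Your proof is correct and follows essentially the same route as the paper: for the nilpotent part the paper simply notes that $N(A)$ is similar to $N(B)^c$ and that $N(B)^c=0$ once $c\ge n_0$, which is the cleaner endpoint of your contrapositive Jordan-block count, and for the eigenvalue claim both arguments use continuity of eigenvalues together with $|\mu_i|=|\alpha|^{1/c_i}\to 1$ for $\alpha\in\sigma(A)\setminus\{0\}$. Your version is slightly more explicit (you extract the quantitative bound $c\le (n-1)/(k-1)$ and pass to a subsequence to fix $\alpha\in\sigma(A)$), but there is no substantive difference in strategy.
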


\begin{proof}
If $A, B\in \mc M_n(\C)$ so that $A=B^c$, then $N(A)$ is similar to $N(B)^c$ by Theorem \ref{thm:complex_singular_roots}. Let $n_0=\mult(0,A)=\mult(0,B)$. Since $N(B)^c=0_{n_0}$ for all $c \geq n_0$, the conclusion follows.

Next, let $(c_j)_{j=1}^{\infty}$ be a strictly increasing sequence of positive integers and $(B_j)_{j=1}^{\infty}$ a convergent sequence satisfying $B_j^{c_j}=A$ and $\lim_{j \rightarrow \infty} B_j=L$.  
 Let  $\lambda_k$, $k=1,\ldots,n$, be the eigenvalues of $A$ (listed with multiplicities). The absolute values of eigenvalues of $B_j$ are equal to $|\lambda_j|^{1/c_j}$, $k=1,\ldots,n$. Since the eigenvalues are continuous functions of the entries of the matrix, this implies that the eigenvalues $\nu_k$ of $L$ satisfy $|\nu_k|=\lim_{c_i\rightarrow \infty} |\lambda_k|^{\frac{1}{c_i}}$, $k=1,\ldots,n$. From this observation we conclude that $\nu_k$ is either equal to $0$ (when $\lambda_k=0$), or has $|\nu_k|=1$ (when $\lambda_k\neq 0$.)
 
\end{proof}

Given $A\in \mc M_n(\C)$ with $|\mc P_{\C}(A)|=\infty$, we want to understand the relationship between the (generalised) eigenspaces of $A$ and $L\in \mc L_{\C}(A)$. This relationship can be tracked through the multiplicity rearrangement matrix $M$ defined and studied below. 

\begin{definition}\label{def:multiplicity_rearrangement}
Let $A \in \mc M_n(\C)$ with $|\sigma_*(A)|=q$ and $L \in \mc L_{\C}(A)$ with $|\sigma_*(L)|=s$. \emph{The multiplicity rearrangement matrix for $(A,L)$} is a $q\times s$ matrix $M$ whose rows and columns are respectively indexed by the elements of $\sigma_*(A)$ and $\sigma_*(L)$ (in some fixed order), and is defined by $$M[\lambda,\nu]:=\dim(\mc E(\lambda,A) \cap \mc E(\nu, L)).$$ ($M[\lambda,\nu]$ denotes the element of $M$ in the row that corresponds to $\lambda$ ($\lambda$-row) and the column that corresponds to $\nu$ ($\nu$-column).)
\end{definition}

In the definition, the multiplicity rearrangement matrix for $(A,L)$ depends on the order of elements in $\sigma_*(A)$ and $\sigma_*(L)$. To remove ambiguity and fix the order, we will first list real elements in decreasing order then complex conjugate pairs ordered first by real parts, then by the absolute value of the imaginary parts. In examples, we will often add eigenvalue labeling of rows and columns to the matrix $M$ (separated by a line), for clarity. 

Our next results detail the relationships between the generalised eigenspaces of $A$ and $L \in \mc L_{\C}(A)$.  

\begin{theorem}\label{prop:M(A,L)complex}
Let $A \in \mc M_n(\C)$, $L \in \mc L_{\C}(A)$, and $M$ be the multiplicity rearrangement matrix for $(A,L)$.
\begin{enumerate}
    \item For each eigenvalue $\lambda$ of $A$ there exists a partition of the multi-set $\mc B_{JCF}(\lambda,A)$ into multisets $\mc B(\lambda,\nu)$,  $\nu \in \sigma_*(L)$, so that $M[\lambda,\nu]$ is equal to the sum of elements in $\mc B(\lambda,\nu)$. (If $\mc B(\lambda,\nu)$ is the empty set, then $M[\lambda,\nu]:=0$.) In particular, the sum of elements in the $\lambda$-row of $M$ is equal to $\mult(\lambda,A)$. 
    \item The sum of elements in $\nu$-column of $M$ is equal to $\mult(\nu,L)$. 
 \item For $\lambda \in \sigma_*(A)$ we have 
    $$\mc E(\lambda, A) \subseteq \sspan\{\mc E(\nu, L); M[\lambda,\nu]\neq 0\},$$
     and 
      if all $\nu \in \sigma_*(L)$ with $M[\lambda,\nu] \neq 0$ satisfy $\mult(\nu,L)=M[\lambda,\nu]$,
     then the two sets are the same. 
    \item For $\nu \in \sigma_*(L)$ we have 
    $$\mc E(\nu, L) \subseteq \sspan\{\mc E(\lambda, A); M[\lambda,\nu]\neq 0\},$$ 
    and if all $\lambda \in \sigma_*(A)$ with $M[\lambda,\nu] \neq 0$ satisfy $\mult(\lambda,A)=M[\lambda,\nu]$,
     then the two sets are the same. 
    \end{enumerate}
\end{theorem}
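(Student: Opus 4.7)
The plan is to first show that $L$ commutes with $A$, giving a joint generalised eigenspace decomposition, and then control each piece using Theorem \ref{thm:higham_lin} and eigenvalue continuity. Since each $B_i$ commutes with $B_i^{c_i} = A$, passing to the limit gives $LA = AL$. Consequently $A$ and $L$ admit a joint generalised eigenspace decomposition: $\C^n$ is the direct sum of the subspaces $\mc E(\lambda, A) \cap \mc E(\mu, L)$ over distinct eigenvalues $\lambda$ of $A$ and $\mu$ of $L$, and in particular
\[
\mc E(\lambda, A) = \bigoplus_{\mu} \bigl(\mc E(\lambda, A) \cap \mc E(\mu, L)\bigr), \quad \mc E(\mu, L) = \bigoplus_{\lambda} \bigl(\mc E(\lambda, A) \cap \mc E(\mu, L)\bigr).
\]
This yields the containments in (3) and (4), together with their equality criteria: if for a fixed $\lambda$ each nonzero summand $\mc E(\nu, L) \cap \mc E(\lambda, A)$ already has full dimension $\mult(\nu, L)$, it must coincide with $\mc E(\nu, L)$, which therefore lies inside $\mc E(\lambda, A)$; the column version is symmetric.

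Next I would rule out cross terms involving eigenvalue $0$. For $\lambda \in \sigma_*(A)$, $A|_{\mc E(\lambda,A)}$ is invertible, so Theorem \ref{thm:higham_lin} forces every eigenvalue of $B_i|_{\mc E(\lambda,A)}$ to be a $c_i$-th root of $\lambda$ of modulus $|\lambda|^{1/c_i} \to 1$; eigenvalue continuity then makes every eigenvalue of $L|_{\mc E(\lambda,A)}$ unimodular, so $\mc E(\lambda, A) \cap \mc E(0, L) = \{0\}$. Dually, by Proposition \ref{prop:nilpotentpart0}, $\mc E(0, A) = \ker A$, which is $B_i$-invariant and satisfies $B_i|_{\ker A}^{c_i} = 0$; each $B_i|_{\ker A}$ is therefore nilpotent, and so is $L|_{\ker A}$, giving $\mc E(0, A) \cap \mc E(\mu, L) = \{0\}$ for $\mu \in \sigma_*(L)$. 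Combined with the joint decomposition, these yield the row sum in (1) and the column sum in (2).

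The delicate part is the partition in (1). Fix $\lambda \in \sigma_*(A)$ and enumerate the Jordan blocks of $A|_{\mc E(\lambda,A)}$ as $J_1,\dots,J_k$ of sizes $b_1,\dots,b_k$. Theorem \ref{thm:higham_lin}, applied to each $c_i$-th root $B_i|_{\mc E(\lambda,A)}$ of the nonsingular $A|_{\mc E(\lambda,A)}$, produces corresponding Jordan blocks of $B_i|_{\mc E(\lambda,A)}$ of the same sizes $b_1,\dots,b_k$ whose eigenvalues $\mu_{i,1},\dots,\mu_{i,k}$ are specific $c_i$-th roots of $\lambda$. Extracting subsequences of $(c_i)$ iteratively (once per $j$), we may assume $\mu_{i,j} \to \nu_j \in \sigma_*(L)$ for each $j$. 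Defining $\mc B(\lambda, \nu) := \{b_j : \nu_j = \nu\}$ then gives a partition of $\mc B_{JCF}(\lambda, A)$, and eigenvalue continuity of the restriction $B_i|_{\mc E(\lambda,A)} \to L|_{\mc E(\lambda,A)}$ yields
\[
\Sigma(\mc B(\lambda,\nu)) = \sum_{j:\nu_j = \nu} b_j = \mult(\nu, L|_{\mc E(\lambda,A)}) = M[\lambda,\nu].
\]

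I expect the main obstacle to be the bookkeeping for this partition: one must track individual Jordan blocks (not merely their sizes), manage subsequence extraction across all $k$ blocks, and exploit the precise structural description in Theorem \ref{thm:higham_lin} that each $c_i$-th root preserves block sizes while possibly sending different blocks of the same eigenvalue of $A$ to different branches of the $c_i$-th root. The rest is a clean combination of commutation, the joint generalised eigenspace decomposition, and continuity of eigenvalues.
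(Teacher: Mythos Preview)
Your argument is correct, and it takes a genuinely different route from the paper. The paper never isolates the observation $LA=AL$; instead it works explicitly with the block structure coming from Theorem~\ref{thm:higham_lin}: writing $B_iZ_\lambda U_\lambda(i)=Z_\lambda U_\lambda(i)\bigl(\oplus_\mu K_{c_i}(\mu,B_i)\bigr)$, grouping the blocks by the limit point $\nu$ of their eigenvalues, and then passing to the limit to obtain a matrix $\hat K(\lambda)$ with $LZ_\lambda=Z_\lambda\hat K(\lambda)$. This establishes the $L$-invariance of each $\mc E(\lambda,A)$ as a by-product of an explicit computation. By contrast, your commutation argument delivers that invariance and the joint generalised eigenspace decomposition in one line, after which items (2)--(4) and the row sum in (1) are immediate once the zero cross terms are ruled out. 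The price is that your route is slightly less constructive: the paper's $\hat K(\lambda)$ gives a concrete model for $L|_{\mc E(\lambda,A)}$ in the Jordan basis of $A$, which is then reused verbatim in Proposition~\ref{prop:M(A,L)real} and Proposition~\ref{prop:(A,L)stoc} (tracking complex-conjugate symmetry and the Perron block through the sets $\sigma(i,\nu)$ and $\mc B(\lambda,\nu)$). Your partition $\mc B(\lambda,\nu)=\{b_j:\nu_j=\nu\}$ carries the same information, so those follow-ups would still go through, but you would need to rephrase them in your language rather than quoting the paper's objects $K_{c_i}(\lambda,\nu)$ directly.
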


\begin{proof}

Let $B$ be a matrix that satisfies $B^c=A$. From Theorem \ref{thm:higham_lin} it follows that for each $\lambda \in \sigma_*(A)$ the multiset $\mc B_{JCF}(\lambda, A)$ partitions into multisets $\mc B_{JCF}(\mu, B)$, $\mu \in \{\lambda\}^{\frac{1}{c}}$.
(Since $|\{\lambda\}^{\frac{1}{c}}|=c$ and some of the multisets $\mc B_{JCF}(\mu,B)$ can be empty, this partition has at most $c$ parts.) In particular,  $\sigma_*(B)\subseteq \cup_{\lambda \in \sigma_*(A)}\{\lambda\}^{\frac{1}{c}}$. Furthermore, $$\mc E(\lambda,A)=\sspan\{\mc E(\mu, B); \mu \in \{\lambda\}^{\frac{1}{c}}\}.$$ 

Let $(B_i)_{i=1}^{\infty}$ be a convergent sequence of matrices that satisfies $B_i^{c_i}=A$ and $\lim_{i\rightarrow \infty}B_i=L$. As noted above, for each $\lambda \in \sigma_*(A)$ and $i$, the multiset $\mc B_{JCF}(\lambda, A)$ partitions into multisets $\mc B_{JCF}(\mu,B_i)$, $\mu \in \{\lambda\}^{\frac{1}{c_i}}$. Also, since the eigenvalues are continuous functions of matrix entries, for each $i$, we can partition $\sigma_*(B_i)$ into $s$ multisets $\sigma(i,\nu)$ so that elements of $\sigma(i,\nu)$ converge to $\nu$ as $i$ goes to infinity. (We can define $\sigma(i,\nu)$ more formally as follows. Choose disjoint open neighbourhoods $\mc U_{\nu}$ of $\nu \in \sigma_*(L)$, and define $\sigma(i,\nu):=\sigma_*(B_i) \cap \mc U_{\nu}$. By considering a sub-sequence of $B_i$ we can assume that $\sigma(i,\nu)$ defined in this way partition $\sigma_*(B_i)$ for all $i \in \N$.)

Let $Z^{-1}AZ=(\oplus_{\lambda \in \sigma_*(A)} J(\lambda,A))\oplus 0_{n_0}$ ( Proposition \ref{prop:nilpotentpart0}). For each $\lambda \in \sigma_*(A)$, let $Z_{\lambda}$ be the submatrix of $Z$ consisting of the columns of $Z$ that belong to $\mc E(\lambda,A)$ so that $AZ_{\lambda}=Z_{\lambda}J(\lambda,A)$. By Theorem \ref{thm:higham_lin} we have: 
\begin{equation*}
B_iZ_{\lambda}U_{\lambda}(i)=
Z_{\lambda}U_{\lambda}(i)\left(\oplus_{\mu \in \{\lambda\}^{\frac{1}{c_i}}} K_{c_i}(\mu, B_i)\right),
\end{equation*}
where $K_{c_i}(\mu, B_i)$ is a direct sum of blocks of the form \eqref{eq:block_root} for the function $f(z)=z^{\frac{1}{c_i}}$ and the eigenvalue $\mu$, and $U_{\lambda}(i)$ are invertible matrices that commute with $J(\lambda,A)$. 
In particular, $\left(\oplus_{\mu \in \{\lambda\}^{\frac{1}{c_i}}} K_{c_i}(\mu, B_i)\right)^{c_i}=J(\lambda,A)$. 
For $\nu \in \sigma_*(L)$ we define: 
$$K_{c_i}(\lambda,\nu)=\oplus_{\mu \in \{\lambda\}^{\frac{1}{c_i}}\cap \sigma(i,\nu)} K_{c_i}(\mu, B_i),$$
and by possibly moving to a sub-sequence, we assume that the sizes of blocks (in some fixed order) in $K_{c_i}(\lambda,\nu)$ are the same for all $i \in \N$. Let us denote the multiset of those block sizes as $\mc B(\lambda,\nu)$, and let $\Sigma(\mc B(\lambda,\nu))$ denote the sum of all elements in $\mc B(\lambda,\nu)$. From
$$B_iZ_{\lambda}=Z_{\lambda}U_{\lambda}(i)(\oplus_{\nu \in \sigma_*(L)} K_{c_i}(\lambda, \nu))U_{\lambda}(i)^{-1},$$
the fact that $\{B_i\}_{i=1}^{\infty}$ is a convergent sequence and $Z_{\lambda}$ has full rank, we deduce that $\{U_{\lambda}(i)(\oplus_{\nu \in \sigma_*(L)} K_{c_i}(\lambda, \nu))U_{\lambda}(i)^{-1}\}_{i=1}^{\infty}$ is a convergent sequence, and
we define: 
$$\hat K(\lambda):=\lim_{i \rightarrow \infty}\left(U_{\lambda}(i)(\oplus_{\nu \in \sigma_*(L)} K_{c_i}(\lambda, \nu))U_{\lambda}(i)^{-1}\right).$$
Observe:
$$LZ_{\lambda}=Z_{\lambda}\hat K(\lambda),$$
and $$Z^{-1}LZ=\left(\oplus_{\lambda \in \sigma_*(A)} \hat K(\lambda)\right)\oplus N',$$
where $N'$ is a nilpotent matrix. 
 In particular,  $\sigma_*(\hat K(\lambda))\subset \sigma_*(L)$ and $\mult(\nu,\hat K(\lambda))=\Sigma(\mc B(\lambda,\nu))$. Writing $\hat K(\lambda)$ in its Jordan canonical form with similarity transformation matrix $T_{\lambda}$:
 $$T_{\lambda}^{-1}\hat K(\lambda)T_{\lambda}=\oplus_{\nu \in \sigma_*(L)}J(\nu,\hat K(\lambda))$$
 and noting 
 \begin{align*}
 \mc E(\nu,L) \cap \sspan(Z_{\lambda}T_{\lambda}^{-1})=\mc E(\nu,L) \cap \sspan(Z_{\lambda})=\mc E(\nu,L) \cap \mc E(\lambda,A),
 \end{align*}
we conclude that $$M[\lambda,\nu]:=\dim(\mc E(\nu, L) \cap \mc E(\lambda,A))=\Sigma(\mc B(\lambda,\nu)).$$
Furthermore: 
$$\mult(\lambda,A)=\dim(\mc E(\lambda,A))=\sum_{\nu \in \sigma_*(L)}\dim(\mc E(\nu, L) \cap \mc E(\lambda,A))=\sum_{\nu \in \sigma_*(L)}M[\lambda,\nu]$$ 
and
\begin{align*}
\mc E(\lambda,A)&=\sspan\{\mc E(\lambda, A) \cap \mc E(\nu,L);  \nu \in \sigma_*(L)\}\\
&=\sspan\{\mc E(\nu, L) \cap \mc E(\lambda,A); \nu \in \sigma_*(L) \text{ and } M[\lambda,\nu]\neq 0\} \\
&\subseteq \sspan\{\mc E(\nu,L); \nu \in \sigma_*(L) \text{ and } M[\lambda,\nu]\neq 0\}. 
\end{align*}
Note that the last inclusion is an equality, if
all $\nu \in \sigma_*(L)$  with $M[\lambda,\nu] \neq 0$ satisfy $\mult(\nu,L)=M[\lambda,\nu]$.

Similarly: 
$$\mult(\nu,L)=\dim(\mc E(\nu,L))=\sum_{\lambda \in \sigma_*(A)}\dim(\mc E(\nu, L) \cap \mc E(\lambda,A))=\sum_{\lambda \in \sigma_*(A)}M[\lambda,\nu]$$ 
and
\begin{align*}
\mc E(\nu,L)&=\sspan\{\mc E(\nu, L) \cap \mc E(\lambda,A);  \lambda \in \sigma_*(A)\}\\
&=\sspan\{\mc E(\nu, L) \cap \mc E(\lambda,A); \lambda \in \sigma_*(A) \text{ and } M[\lambda,\nu]\neq 0\} \\
&\subseteq \sspan\{\mc E(\lambda,A); \lambda \in \sigma_*(A) \text{ and } M[\lambda,\nu]\neq 0\}. 
\end{align*}
As above, we observe that the last inclusion is an equality, if
all $\lambda \in \sigma_*(A)$  with $M[\lambda,\nu] \neq 0$ satisfy $\mult(\lambda,A)=M[\lambda,\nu]$.
\end{proof}

\begin{corollary}\label{cor:A,L simple eigs}
Let $A\in \mc M_n(\C)$ be a nonsingular matrix, $L \in \mc L_{\C}(A)$, and $M$ the multiplicity rearrangement matrix for $(A,L)$.
\begin{enumerate}
\item If $L$ has only simple eigenvalues then $\mc E(\lambda, A) = \sspan\{\mc E(\nu, L); M[\lambda,\nu]\neq 0\}$ for every $\lambda \in \sigma_*(A)$. 
\item If $A$ has only simple eigenvalues then $\mc E(\nu, L) = \sspan\{\mc E(\lambda, A); M[\lambda,\nu]\neq 0\}$ for every $\nu \in \sigma_*(L)$.
\end{enumerate}
If either of the statements above hold, then there exists an invertible matrix $Z$ so that $Z^{-1}AZ$ and $Z^{-1}LZ$ are diagonal matrices, i.e. $A$ and $L$ are simultaneously diagonalisable. 
\end{corollary}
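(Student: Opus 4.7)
The plan is to deduce parts (1) and (2) directly from Theorem \ref{prop:M(A,L)complex}, and then to establish simultaneous diagonalisability via a short commutativity argument.

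For part (1), assume $L$ has only simple eigenvalues, so $\mult(\nu,L)=1$ for every $\nu\in\sigma_*(L)$. The column-sum identity in Theorem \ref{prop:M(A,L)complex}(2) then forces $M[\lambda,\nu]\in\{0,1\}$, so whenever $M[\lambda,\nu]\neq 0$ we automatically have $M[\lambda,\nu]=1=\mult(\nu,L)$. This is exactly the hypothesis required to upgrade the inclusion in Theorem \ref{prop:M(A,L)complex}(3) to an equality, proving part (1). Part (2) follows symmetrically from the row-sum identity in Theorem \ref{prop:M(A,L)complex}(1) combined with part (4).

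For the final claim, the key observation is that $A$ and $L$ commute. By the definition of $L\in\mc L_{\C}(A)$, there is a sequence $B_i\to L$ with $B_i^{c_i}=A$. Since $A=B_i^{c_i}$ is a polynomial in $B_i$, we have $AB_i=B_i A$ for every $i$; taking the limit $i\to\infty$ yields $AL=LA$.

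Now I apply commutativity. In case (1), $L$ is diagonalisable and every eigenspace $\mc E(\nu,L)$ is one-dimensional. Because $A$ commutes with $L$, each $\mc E(\nu,L)$ is $A$-invariant, so $A$ acts on it as a scalar. Therefore every eigenvector of $L$ is simultaneously an eigenvector of $A$, so a matrix $Z$ whose columns form an eigenbasis of $L$ diagonalises both matrices. In case (2), the identical argument applies with the roles of $A$ and $L$ exchanged: $A$ has $n$ simple eigenvalues hence is diagonalisable with one-dimensional eigenspaces, $L$ preserves each of them and acts as a scalar, and a common eigenbasis is obtained from the columns of a diagonalising transform of $A$. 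The main obstacle is really just the commutativity step, which is unremarkable once one recalls that every approximating root $B_i$ commutes with $A$ and that this relation survives the limit.
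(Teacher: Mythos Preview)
Your derivation of parts (1) and (2) matches the paper's: both use the row/column sum identities in Theorem~\ref{prop:M(A,L)complex} to force $M$ to be a $0$--$1$ matrix, and then invoke the equality clauses of items (3) and (4). So on those parts there is nothing to distinguish.

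For the simultaneous diagonalisability, your route differs from the paper's. The paper argues internally from the proof of Theorem~\ref{prop:M(A,L)complex}: once $M$ is a $0$--$1$ matrix, every multiset $\mc B(\lambda,\nu)$ must be empty or $\{1\}$, which forces all Jordan blocks of $A$ to have size $1$; then the eigenspace equalities in items (1)/(2) of the corollary show that a diagonalising $Z$ for one matrix also diagonalises the other. You instead observe directly that $A$ and $L$ commute, because each $B_i$ commutes with its own power $A=B_i^{c_i}$ and commutativity passes to the limit; the conclusion then follows from the standard fact that a matrix commuting with a diagonalisable matrix with simple eigenvalues is diagonalised by the same eigenbasis. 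Your argument is correct, more elementary, and entirely self-contained --- it does not reach back into the proof of Theorem~\ref{prop:M(A,L)complex} for the auxiliary objects $\mc B(\lambda,\nu)$. The paper's argument, on the other hand, extracts a little more structural information (explicitly that all Jordan block sizes are $1$) and fits more tightly with the machinery already built.
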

\begin{proof}
Item 1. follows directly from item 3. in Theorem \ref{prop:M(A,L)complex} by observing that if $\nu \in \sigma_*(L)$ is a simple eigenvalue of $L$, then $\mult(\nu,L)=M[\nu,\lambda]$ if and only if $M[\nu,\lambda] \neq 0$. Item 2. can be deduced from item 4. in Theorem \ref{prop:M(A,L)complex} in a similar way. 

In both cases, $M$ is a $0$-$1$-matrix, hence $A$ and $L$ have to be diagonalisable matrices since all the sets $\mc B(\lambda,\nu)$ in item 1 of Theorem \ref{prop:M(A,L)complex} have to either be empty or equal to $\{1\}$.
Under the assumptions of item 1. we have $Z^{-1}LZ$ diagonal implies $Z^{-1}AZ$ diagonal. Similarly, under the assumptions of item 2., $Z^{-1}AZ$ diagonal implies $Z^{-1}LZ$ diagonal. 
\end{proof}

\begin{example}
Let 
{\footnotesize $$A=\lambda \left(
\begin{array}{cccccc}
 1 & 0 & 0 & 0 & 0 & 0 \\
 0 & 1 & 0 & 0 & 0 & 0 \\
 0 & 0 & 1 & 0 & 0 & 0 \\
 0 & 0 & 0 & -1 & 0 & 0 \\
 0 & 0 & 0 & 0 & -1 & 0 \\
 0 & 0 & 0 & 0 & 0 & -1 \\
\end{array}
\right), \, B_j'=\lambda^{\frac{1}{2j}}\left(
\begin{array}{cccccc}
 1 & 0 & 0 & 0 & 0 & 0 \\
 0 & 1 & 0 & 0 & 0 & 0 \\
 0 & 0 & -1 & 0 & 0 & 0 \\
 0 & 0 & 0 & -e^{\frac{i \pi }{2 j}} & 0 & 0 \\
 0 & 0 & 0 & 0 & -e^{\frac{i \pi }{2 j}} & 0 \\
 0 & 0 & 0 & 0 & 0 & e^{\frac{i \pi }{2 j}} \\
\end{array}
\right)$$}
where $\lambda>0$, and $j \in \N$. We have
\begin{align*}
\sigma_*(A)=\{\lambda, -\lambda\},\, \sigma_*(B_j')=\{\lambda^{\frac{1}{2j}},-\lambda^{\frac{1}{2j}},-\lambda^{\frac{1}{2j}}e^{\frac{i \pi }{2 j}},\lambda^{\frac{1}{2j}}e^{\frac{i \pi }{2 j}}\}.
\end{align*}
Since $(B_j')^{2j}=A$ we have $B_j' \in \mc R_{\C}(A)$. The sequence $(B_j')_{j=1}^{\infty}$ converges to the matrix:
{\footnotesize $$L'=\left(
\begin{array}{cccccc}
 1 & 0 & 0 & 0 & 0 & 0 \\
 0 & 1 & 0 & 0 & 0 & 0 \\
 0 & 0 & -1 & 0 & 0 & 0 \\
 0 & 0 & 0 & -1 & 0 & 0 \\
 0 & 0 & 0 & 0 & -1 & 0 \\
 0 & 0 & 0 & 0 & 0 & 1 \\
\end{array}
\right)$$}
with $\sigma_*(L')=\{1,-1\}$. The multiplicity rearrangement matrix for $(A,L')$ is equal to 
$$M'=\left(
\begin{array}{c|cc}
  & 1 &  -1 \\
  \hline
 \lambda  & 2 & 1 \\
 -\lambda  & 1 & 2 \\
\end{array}
\right).$$
Since all the matrices involved are diagonal, the associated eigenspaces are straightforward to compute.

Note that for any $3 \times 3$ matrices $S$ and $S'$ the matrix $Z:=S\oplus S'$ commutes with $A$. Let  
$$Z_j:=\left(
\begin{array}{ccc}
 1 & \frac{1}{j}+1 & -\frac{1}{j^2} \\
 0 & 1 & \frac{j-1}{j} \\
 2 & 0 & 1 \\
\end{array}
\right) \oplus \left(
\begin{array}{ccc}
 1 & \frac{3}{j}+2 & 5-\frac{9}{j^2} \\
 0 & 1 & 2-\frac{3}{j} \\
 \frac{2}{3} & 0 & 1 \\
\end{array}
\right)$$
and $B_j=Z_jB_j'Z_j^{-1}$.  
Then $(B_j)^{2j}=A$, $B_j \in \mc R_{\C}(A)$ and the sequence $(B_j)_{j=1}^{\infty}$ converges to the matrix
{\footnotesize $$L=\left(
\begin{array}{cccccc}
 1 & 0 & 0 & 0 & 0 & 0 \\
 \frac{4}{3} & -\frac{1}{3} & -\frac{2}{3} & 0 & 0 & 0 \\
 \frac{4}{3} & -\frac{4}{3} & \frac{1}{3} & 0 & 0 & 0 \\
 0 & 0 & 0 & -21 & 40 & 30 \\
 0 & 0 & 0 & -8 & 15 & 12 \\
 0 & 0 & 0 & -4 & 8 & 5 \\
\end{array}
\right)$$}
with $\sigma_*(L)=\{1,-1\}$ and 
\begin{align*}
\mc E(1,L)=\sspan{\footnotesize\left(
\begin{array}{ccc}
 0 & \frac{1}{2} & 1 \\
 0 & 0 & 1 \\
 0 & 1 & 0 \\
 5 & 0 & 0 \\
 2 & 0 & 0 \\
 1 & 0 & 0 \\
\end{array}
\right)}, \, 
\mc E(-1,L)=\sspan{\footnotesize\left(
\begin{array}{ccc}
 0 & 0 & 0 \\
 0 & 0 & 1 \\
 0 & 0 & 1 \\
 \frac{3}{2} & 2 & 0 \\
 0 & 1 & 0 \\
 1 & 0 & 0 \\
\end{array}
\right)}.
\end{align*}
The multiplicity rearrangement matrix $M$ for $(A,L)$ is equal to $M'$. 

Following the notation in the proof of Theorem \ref{prop:M(A,L)complex} we have $\sigma(j,1)=\{\lambda^{\frac{1}{2j}},\lambda^{\frac{1}{2j}}e^{\frac{i \pi }{2 j}}\}$, $\sigma(j,-1)=\{-\lambda^{\frac{1}{2j}},-\lambda^{\frac{1}{2j}}e^{\frac{i \pi }{2 j}}\}$ and
\begin{align*}
\mc E(\sigma(j,1),B_j)&=\sspan {\footnotesize \left(
\begin{array}{ccc}
 \frac{1}{2} & \frac{1}{j}+1 & 0 \\
 0 & 1 & 0 \\
 1 & 0 & 0 \\
 0 & 0 & 5-\frac{9}{j^2} \\
 0 & 0 & 2-\frac{3}{j} \\
 0 & 0 & 1 \\
\end{array}
\right)}\\
\mc E(\sigma(j,-1),B_j)&=\sspan\footnotesize{\left(
\begin{array}{ccc}
 -\frac{1}{j^2} & 0 & 0 \\
 \frac{j-1}{j} & 0 & 0 \\
 1 & 0 & 0 \\
 0 & \frac{3}{2} & \frac{3}{j}+2 \\
 0 & 0 & 1 \\
 0 & 1 & 0 \\
\end{array}
\right)}.
\end{align*}
Clearly, $\mc E(\sigma(j,1),B_j)$ converges to $\mc E(1,L)$ and  $\mc E(\sigma(j,-1),B_j)$ converges to $\mc E(-1,L)$. This example illustrates, that the spaces $\mc E(\sigma(j,\nu),B_j)$, $\nu \in \sigma_*(L)$ may change with $j$, but the dimensions $\dim(\mc E(\sigma(j,\nu),B_j)\cap \mc E(\lambda,A))$, $\lambda \in \sigma_*(A)$, are constant. 
\end{example}

Next, we consider real matrices $A$, and their real roots. For $A \in \mc M_n(\R)$ the non-real eigenvalues come in complex conjugate pairs $(\lambda, \bar{\lambda})$, where $\mc B_{JCF}(\lambda,A)=\mc B_{JCF}(\bar{\lambda},A)$. This property needs to be preserved when taking roots of $A$. Next result details how this is reflected in the multiplicity rearrangement matrix.

\begin{proposition}\label{prop:M(A,L)real}
Let $A \in \mc M_n(\R)$, $L \in \mc L_{\R}(A)$, and let $M$ be the multiplicity rearrangement matrix for $(A,L)$. Let $(B_i)_{i=1}^{\infty}$ be the sequence satisfying $B_i^{c_i}=A$ and $\lim_{i\rightarrow \infty}B_i=~L.$ 
\begin{enumerate}
\item Let $\lambda,\bar{\lambda} \in \sigma_*(A)$ be a complex conjugate pair of non-real eigenvalues of $A$ and $\nu \in \sigma_*(L)$.
\begin{itemize}
\item If $\nu \in \{1,-1\}$, then  $M[\lambda,\nu]=M[\bar{\lambda},\nu]$. 
\item If $\nu \not\in \{1,-1\}$, then  $M[\lambda,\nu]=M[\bar{\lambda},\bar{\nu}]$.
\end{itemize}
\item Let $\lambda \in \sigma_*(A) \cap \R$ and $\nu \in \sigma_*(L) \cap (\C\setminus\R)$.  Then $M[\lambda,\nu]=M[\lambda,\bar{\nu}]$. 
\item At least one of the statements below has to hold for all positive eigenvalues $\lambda$ and negative eigenvalues $-\lambda'$ of $A$. \begin{itemize}
\item $|(c_i)_{i=1}^{\infty}\cap 2 \N|=\infty$ and $M[-\lambda',1]$, $M[-\lambda',-1]$ are both even. 
\item $|(c_i)_{i=1}^{\infty}\cap 2 \N+1|=\infty$ and $M[\lambda,-1]$, $M[-\lambda',1]$ are both even. 
\end{itemize}
\end{enumerate}
\end{proposition}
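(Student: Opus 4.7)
The plan is to exploit two distinct ingredients: the conjugation symmetry available for real matrices (items 1 and 2), and the parity constraints inherent in real $c$-th roots of positive and negative reals (item 3).

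For items 1 and 2, since $A$ and $L$ are real, the involution $v \mapsto \bar v$ on $\C^n$ restricts to a dimension-preserving bijection $\mc E(\lambda, A) \to \mc E(\bar\lambda, A)$, and likewise for $L$; in particular it carries $\mc E(\lambda, A) \cap \mc E(\nu, L)$ bijectively onto $\mc E(\bar\lambda, A) \cap \mc E(\bar\nu, L)$. This yields the general identity $M[\lambda, \nu] = M[\bar\lambda, \bar\nu]$. Item 2 is this identity with $\lambda = \bar\lambda \in \R$, and item 1 is the pair of specializations $\bar\nu = \nu \in \{1,-1\}$ and $\bar\nu \neq \nu$.

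For item 3, I would first observe that by pigeonhole at least one of $(c_i)_{i=1}^{\infty}\cap 2\N$ and $(c_i)_{i=1}^{\infty}\cap(2\N+1)$ is infinite, so I may pass to a subsequence of $(B_i)$ whose root-indices share a common parity. Since the limit $L$, and hence the multiplicity rearrangement matrix $M$, is unchanged on any subsequence, Theorem~\ref{prop:M(A,L)complex}(1) still gives $M[\lambda,\nu] = \Sigma(\mc B(\lambda,\nu))$, where $\mc B(\lambda,\nu)$ collects Jordan block sizes in $B_i$ whose eigenvalues are $c_i$-th roots of $\lambda$ converging to $\nu$. Reality of $B_i$ forces its non-real eigenvalues to occur in conjugate pairs with identical Jordan block multisets. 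If $c_i$ is even, then $-\lambda'<0$ admits no real $c_i$-th root, so every block of $B_i$ at a $c_i$-th root of $-\lambda'$ is paired with an identically sized block at the conjugate eigenvalue; since $\{1,-1\}$ is fixed by conjugation, the sets of such roots converging to $\pm 1$ are themselves closed under conjugation, forcing $\mc B(-\lambda',\pm 1)$ to decompose into pairs of equal multisets and $M[-\lambda',\pm 1]$ to be even. If $c_i$ is odd, then the only real $c_i$-th root of $\lambda>0$ is $\lambda^{1/c_i}\to 1$ and the only real $c_i$-th root of $-\lambda'<0$ is $-\lambda'^{1/c_i}\to -1$; hence all roots converging to $-1$ from $\lambda>0$ and to $1$ from $-\lambda'<0$ are non-real and conjugate-paired, giving $M[\lambda,-1]$ and $M[-\lambda',1]$ even.

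The main obstacle is the bookkeeping in the parity case analysis: one must verify that the complex-conjugate pairing of Jordan blocks of $B_i$ is compatible with the grouping that defines $\mc B(\lambda,\pm 1)$, i.e.\ that two conjugate branches of $c_i$-th roots do not degenerate into a single real limit in a way that breaks the pair. Because the relevant target limits $\pm 1$ are fixed points of conjugation, the two conjugate root-branches converge jointly as a conjugate pair to the common real limit, so the conjugate pairing is preserved after passing to the limit and the even count is maintained.
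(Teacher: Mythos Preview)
Your proof is correct, and for item~3 it follows essentially the same parity-and-conjugate-pairing argument as the paper, including the passage to a subsequence of fixed parity and the observation that the relevant real $c_i$-th roots (none in the even case; a single one with the ``wrong'' sign in the odd case) cannot contribute to the targeted entries.

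For items~1 and~2, however, your route is genuinely more direct than the paper's. The paper re-enters the machinery of the proof of Theorem~\ref{prop:M(A,L)complex} and argues that the block-size multisets satisfy $\mc B(\lambda,\nu)=\mc B(\bar\lambda,\bar\nu)$, from which the equality of their sums $M[\lambda,\nu]=M[\bar\lambda,\bar\nu]$ follows. You instead work straight from the definition $M[\lambda,\nu]=\dim\bigl(\mc E(\lambda,A)\cap\mc E(\nu,L)\bigr)$ and use only that conjugation on $\C^n$ is a real-linear isomorphism carrying $\mc E(\lambda,A)\cap\mc E(\nu,L)$ onto $\mc E(\bar\lambda,A)\cap\mc E(\bar\nu,L)$. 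This is cleaner and avoids any dependence on the internal constructions of the previous proof. The paper's approach does yield the marginally stronger statement that the multisets $\mc B(\lambda,\nu)$ themselves coincide under conjugation, but that extra information is not used in the proposition.
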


\begin{proof}
We use the notation and notions developed in the proof of Theorem \ref{prop:M(A,L)complex}. 
\begin{enumerate}
    \item Let $\lambda \in \sigma_*(A)$ be a non-real eigenvalue of $A$. Then  $\bar{\lambda} \in \sigma_*(A)$, and for all $\nu \in \sigma_*(L)$, we have $\mc B(\lambda,\nu)=\mc B(\bar{\lambda},\bar{\nu})$ and $\mc B(\bar{\lambda},\nu)=\mc B(\lambda,\bar{\nu})$. 
    For $\nu \notin \{1,-1\}$ this implies $\bar \nu \in \sigma_*(L)$, $M[\lambda,\nu]=M[\bar \lambda,\bar \nu]$ and $M[\bar \lambda,\nu]=M[\lambda,\bar \nu]$. 
    For $\nu \in \{1,-1\}$ we have $\nu=\bar \nu$ and $M[\lambda,\nu]=M[\bar \lambda,\nu]$. 
    \item Let $\lambda \in \sigma_*(A) \cap \R$ and $\nu \in \sigma_*(L) \cap (\C\setminus\R)$. Then $\bar \nu \in \sigma_*(L)$ and $\mc B(\lambda,\nu)=\mc B(\lambda,\bar \nu)$, hence $M[\lambda,\nu]=M[\lambda,\bar \nu]$. 
    \item Note that at least one of the conditions $|(c_i)_{i=1}^{\infty}\cap 2 \N|=\infty$ and $|(c_i)_{i=1}^{\infty}\cap 2 \N+1|=\infty$ has to hold, and assume first that $|(c_i)_{i=1}^{\infty}\cap 2 \N|=\infty$. By possibly moving to a sub-sequence we can assume  $(c_i)_{i=1}^{\infty}\subseteq 2 \N$, and write $c_i=2 c_i'$. If $\mu \in \{-\lambda'\}^{\frac{1}{2 c_i'}}$ with $\mc B_{JCF}(\mu,B_i) \neq \emptyset$, then  $\bar{\mu} \in \{-\lambda'\}^{\frac{1}{2 c_i}}$ and 
    $\mc B_{JCF}(\mu,B_i)=\mc B_{JCF}(\bar{\mu},B_i)$. 
   (In particular, $\mult(-\lambda',A)$ has to be even.)
    Furthermore, if  for $\nu \in \{1,-1\}$ we have $\mu \in \sigma(i,\nu)$, then $\bar \mu \in \sigma(i,\nu)$. Consequently, each of the sets $\mc B(\lambda',1)$, $\mc B(\lambda', -1)$ has every element appearing even number of times, and the entries $ M[-\lambda',1] $ and $M[-\lambda',-1]$ are even.
    
   Next, let $|(c_i)_{i=1}^{\infty}\cap 2 \N+1|=\infty$, and as above let us assume $(c_i)_{i=1}^{\infty}\subseteq 2\N+1$ and $c_i=2c_i'+1.$ The set  $\{\lambda\}^{\frac{1}{2c_i'+1}} $ consist of a positive real number and complex conjugate pairs, while the set $\{-\lambda'\}^{\frac{1}{2c_i'+1}} $ consists of a negative real number and complex conjugate pairs. Hence, if $\mu \in \sigma(i,-1)\cap \{\lambda\}^{\frac{1}{2c_i'+1}}$, then $\bar{\mu} \in \sigma(i,-1)\cap \{\lambda\}^{\frac{1}{2c_i'+1}}$, and $\mc B(\lambda,-1)$ has to have every element appearing even number of times (blocks coming from $\mc B_{JCF}(\mu,B_i)$ and $\mc B_{JCF}(\bar{\mu},B_i)$ have to be of equal size). This implies $M[\lambda,-1]$ has to be even. Similarly, if $\mu \in \sigma(i,1)\cap \{-\lambda'\}^{\frac{1}{2c_i'+1}}$, then $\bar{\mu} \in \sigma(i,1)\cap \{-\lambda'\}^{\frac{1}{2c_i'+1}}$, and $\mc B(-\lambda',1)$ has to have every element appearing even number of times, implying $M[-\lambda',1]$ is even. 
\end{enumerate}
\end{proof}

\begin{proposition}\label{prop:(A,L)stoc}
    Let $A \in \mc \mc S_n^+$ be irreducible, $L \in \mc L_+(A)$, and let $M$ be the multiplicity rearrangement matrix for $(A,L)$. Then the first row and the first column of $M$ both correspond to the eigenvalue $1$, $M[1,1]=1$ and $M[1,\nu]=0$ for all $\nu \in \sigma_*(L)\setminus \{1\}$. 
\end{proposition}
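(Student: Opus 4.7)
The plan is to exploit the Perron--Frobenius structure of irreducible stochastic matrices together with Theorem \ref{prop:M(A,L)complex} and the fact that the set of stochastic matrices is closed under taking limits. The key observation throughout is that the all-ones vector $\mathbf{1}_n$ is a common eigenvector of $A$ and every stochastic matrix converging to $L$.

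First, I would verify that the row and column indexed by $1$ are indeed the first ones under the paper's ordering convention. Since $A$ is irreducible and stochastic, the Perron--Frobenius theorem gives that $1 \in \sigma_*(A)$ is simple with eigenvector $\mathbf{1}_n$, and $|\lambda| \le 1$ for every $\lambda \in \sigma(A)$; so $1$ is the largest real eigenvalue of $A$ and therefore heads the ordering of $\sigma_*(A)$. For $L$, take the defining sequence $(B_i)_{i=1}^\infty \subseteq \mc R_+(A)$. Each $B_i$ is stochastic, so $B_i \mathbf{1}_n = \mathbf{1}_n$, and passing to the limit $L\mathbf{1}_n = \mathbf{1}_n$, so $L$ is stochastic and $1 \in \sigma_*(L)$. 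By Proposition \ref{prop:nilpotentpart0}, the nonzero eigenvalues of $L$ all lie on the unit circle, so $1$ is again the largest real eigenvalue and indexes the first column of $M$.

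Next I would extract $M[1,1] = 1$. Theorem \ref{prop:M(A,L)complex}(1) gives
\[
\sum_{\nu \in \sigma_*(L)} M[1,\nu] = \mult(1,A) = 1,
\]
so exactly one $\nu^* \in \sigma_*(L)$ satisfies $M[1,\nu^*] = 1$, and all other entries of the $1$-row vanish. It remains to identify $\nu^*$ as $1$.

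For this, I would invoke Theorem \ref{prop:M(A,L)complex}(3):
\[
\mc E(1,A) \subseteq \sspan\{\mc E(\nu,L) : M[1,\nu] \neq 0\} = \mc E(\nu^*, L).
\]
Since $\mc E(1,A) = \sspan\{\mathbf{1}_n\}$, this forces $\mathbf{1}_n \in \mc E(\nu^*,L)$. On the other hand, $L\mathbf{1}_n = \mathbf{1}_n$ shows $\mathbf{1}_n \in \mc E(1,L)$. As generalised eigenspaces corresponding to distinct eigenvalues have trivial intersection and $\mathbf{1}_n \neq 0$, we conclude $\nu^* = 1$, and the proposition follows.

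I do not anticipate a serious obstacle: the argument is essentially that $\mathbf{1}_n$ is a simple Perron eigenvector of $A$ which is automatically preserved under the limit process, so it must sit in the generalised $1$-eigenspace of $L$. The only minor subtlety to mention cleanly is that $L$ inherits stochasticity from the $B_i$ because stochasticity is defined by closed conditions (nonnegativity and a linear row-sum equation).
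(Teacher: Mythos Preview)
Your argument is correct. The only point worth noting is a mild difference in route from the paper. The paper's proof re-enters the machinery built inside the proof of Theorem~\ref{prop:M(A,L)complex}: it takes the sequence $(B_i)$ (noting each $B_i$ is irreducible stochastic), and argues directly that $\sigma(i,1)\cap\{1\}^{1/c_i}=\{1\}$, hence $\mc B(1,1)=\{1\}$ and $\mc B(1,\nu)=\emptyset$ for $\nu\neq 1$. You instead use Theorem~\ref{prop:M(A,L)complex} as a black box: item~(1) forces the $1$-row of $M$ to have a single nonzero entry, and item~(3) combined with $L\mathbf{1}_n=\mathbf{1}_n$ pins that entry to the $1$-column via disjointness of generalised eigenspaces. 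Your version is a little more self-contained, since it does not require the reader to recall the internal objects $\sigma(i,\nu)$ and $\mc B(\lambda,\nu)$; the paper's version is shorter because it simply reads off the answer from those objects. Substantively, both rest on the same fact that $\mathbf{1}_n$ is a common Perron eigenvector of $A$, the $B_i$, and $L$.
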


\begin{proof}
Let $(B_i)_{i=1}^{\infty}$ be the sequence of irreducible stochastic matrices satisfying $B_i^{c_i}=A$ and $\lim_{i\rightarrow \infty}B_i=~L.$ Then (as defined in the proof of Theorem \ref{prop:M(A,L)complex}) we have $\sigma(i,1)\cap \{1\}^{\frac{1}{c_i}}=\{1\}$ for all $i$, $\mc B(1,1)=\{1\}$, and $\mc B(1,\nu)=\emptyset$ for all $\nu \in \sigma_*(L)\setminus \{1\}$. The statement follows.  
\end{proof}

\begin{remark}\label{rem:left eigenspaces}
Although results in this section only consider right generalised eigenspaces of matrices ($\mc E(\lambda,A)$, $\mc E(\nu, L)$), note that equivalent statements hold for left generalised eigenspaces ($\mc E_{\ell}(\lambda,A)$, $\mc E_{\ell}(\nu, L)$). 
\end{remark}

\section{$\mc P_+(A)$ and $\mc L_+(A)$}\label{sec:L_+(A)}

We now direct our research towards irreducible stochastic matrices $A$ satisfying $|\mc P_+(A)|=\infty$. Note, every irreducible nonnegative matrix is similar to an irreducible stochastic matrix via diagonal similarity. Hence, the study of nonnegative roots of nonnegative irreducible matrices can be reduced to the study of stochastic roots of stochastic irreducible matrices.

Let $\mc S\subset \mc M_n(\C)$ be closed under multiplication, $A \in \mc S$, and $a,b \in \N$. If $a \in \mc P_{\mc S}(A)$ and $b$ divides $a$, then it is easy to see that $b \in \mc P_{\mc S}(A)$. Although the set of irreducible stochastic matrices is not closed under multiplication, the same observation holds. 
Recall, the peripheral spectrum of $A$ (referred to in the proof below) is the multiset $\{\lambda\in \sigma(A):|\lambda|=\rho(A)\},$ where $\rho(A)$ denotes the spectral radius of $A$.

\begin{lemma}\label{lem:P+_basic}
Let $A \in \mc S_n^+$ be an irreducible matrix with index $h$.
\begin{enumerate}
\item If $a \in \mc P_{+}(A)$ and $b \in \N$ divides $a$, then $b \in \mc P_{+}(A)$. 
\item $\mc P_+(A) \subseteq \mc{CP}(h)$. 
\end{enumerate}
\end{lemma}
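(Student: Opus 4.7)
For part 1, the plan is purely structural. Given $a \in \mc P_{+}(A)$, pick $B \in \mc S_n^+$ with $B^a = A$, and write $a = bk$. Then $B^k$ lies in $\mc S_n^+$ because the set of stochastic matrices is closed under multiplication, and $(B^k)^b = B^{a} = A$, which certifies $b \in \mc P_+(A)$. No subtleties arise.

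For part 2, the plan is to exploit the Perron--Frobenius structure of the peripheral spectrum. Fix $c \in \mc P_+(A)$ and a stochastic $B$ with $B^c = A$. The first step is to observe that $B$ is itself irreducible: if $B$ were reducible, a suitable permutation would block-upper-triangularise $B$, hence also $B^c = A$, contradicting irreducibility of $A$. So $B$ is irreducible with some index $h_B$, and by Perron--Frobenius its peripheral spectrum consists precisely of the $h_B$-th roots of unity, each appearing as a simple eigenvalue; all other eigenvalues have modulus strictly less than $1$. Since raising to the $c$-th power preserves which eigenvalues are peripheral, the peripheral spectrum of $B^c = A$ consists of the values $e^{2\pi i ck/h_B}$ for $k=0,1,\ldots,h_B-1$, which is the set of $h_B/\gcd(c,h_B)$-th roots of unity, each appearing with multiplicity $\gcd(c,h_B)$.

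Now I compare against the known peripheral spectrum of $A$. Because $A$ is irreducible with index $h$, its peripheral eigenvalues are exactly the $h$ distinct $h$-th roots of unity, each simple. Matching multiplicity forces $\gcd(c,h_B)=1$, and matching the number of distinct peripheral eigenvalues then forces $h_B/\gcd(c,h_B) = h_B = h$. Combining these gives $\gcd(c,h)=1$, i.e.\ $c \in \mc{CP}(h)$, as required.

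The only delicate step is the irreducibility of $B$ (if one instead tried to allow reducible $B$, the peripheral structure becomes a union over Frobenius components and the counting argument breaks); everything else is a direct Perron--Frobenius multiplicity count. I would present these two items in the order above, with the irreducibility of $B$ highlighted as the bridge that lets Perron--Frobenius apply to $B$ at all.
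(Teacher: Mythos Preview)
Your proof is correct and follows essentially the same approach as the paper: part 1 is identical, and part 2 in both cases rests on the Perron--Frobenius peripheral spectrum of $B$ and its $c$-th power. Your version is somewhat more explicit---you argue that $B$ is irreducible and derive $h_B=h$ along the way, whereas the paper asserts directly that the peripheral spectrum of $B$ equals $\{1\}^{1/h}$---but the underlying idea is the same.
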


\begin{proof}
    To prove the first item let us write $a=mb$ for some $m \in \N$. If
    $A=B^a=(B^m)^b$ for $B \in \mc S_n^+$, then both $B$ and $B^m$ have to be irreducible stochastic matrices, and the conclusion follows. 

    Now let $c \in \mc P_+(A)$, and $B \in \mc S_n^+$ with $B^c=A$. Then the peripheral spectrum of both $A$ and $B$ is equal to $\{1\}^{\frac{1}{h}}$. On the other hand, the multiset $\{\zeta^c; \zeta \in \{1\}^{\frac{1}{h}}\}$ also has to be equal to the peripheral spectrum of $A$. Since $\{\zeta^c; \zeta \in \{1\}^{\frac{1}{h}}\}=\{1\}^{\frac{1}{h}}$ if and only if $\gcd(c,h)=1$, the statement follows. 
\end{proof}

\begin{lemma}\label{lem:all_eigs_abs_1}
    If $A \in \mc S_n^+$ is nonsingular and irreducible with the property that every eigenvalue $\lambda$ of $A$ satisfies $|\lambda|=1$, then $A\simP C_n$.
\end{lemma}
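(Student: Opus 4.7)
The plan is to use Perron–Frobenius theory to pin down the spectrum of $A$ exactly, then invoke the Frobenius normal form of an irreducible nonnegative matrix to recover the cyclic structure.

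First I would note that, since $A$ is an irreducible stochastic matrix, $\rho(A)=1$ and $A$ has period (index of imprimitivity) equal to some $h\in\N$. By the Perron--Frobenius theorem for irreducible nonnegative matrices, the peripheral spectrum $\{\lambda\in\sigma(A):|\lambda|=\rho(A)\}=\{\lambda\in\sigma(A):|\lambda|=1\}$ consists precisely of the $h$ distinct $h$-th roots of unity, and each of these appears as a simple eigenvalue of $A$.

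Next I would use the hypothesis that every eigenvalue satisfies $|\lambda|=1$. This forces the peripheral spectrum to contain all $n$ eigenvalues of $A$ (counted with multiplicity). Since the peripheral eigenvalues are simple and there are exactly $h$ of them, we obtain $h=n$, so that $\sigma(A)=\{1,\omega,\omega^2,\ldots,\omega^{n-1}\}$ with $\omega=e^{2\pi i/n}$. (The nonsingularity hypothesis is then automatic but is consistent.)

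Finally, I would apply the Frobenius cyclic normal form: an irreducible nonnegative matrix of period $h$ is permutationally similar to a block-cyclic matrix with $h$ diagonal zero blocks and $h$ off-diagonal blocks $A_{12},A_{23},\ldots,A_{h-1,h},A_{h1}$ arranged cyclically. With $h=n$, every one of these blocks must be a $1\times 1$ scalar, so after a permutation similarity $A$ has exactly one nonzero entry in each row, placed in the super-cyclic pattern. Stochasticity then forces each of those entries to equal $1$, which is precisely the definition of $C_n$ (up to relabeling of the cycle, which is itself a permutation similarity). Hence $A\simP C_n$.

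The main obstacle is not really an obstacle but a matter of bookkeeping: cleanly citing/recalling that the Frobenius normal form of an irreducible matrix of period $h$ has exactly $h$ cyclic blocks, and that the peripheral eigenvalues of such a matrix are the simple $h$-th roots of $\rho(A)^h$. Once these two classical facts are in hand, the dimension count $h=n$ and the stochastic row-sum condition finish the argument immediately.
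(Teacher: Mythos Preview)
Your proposal is correct and is essentially the same approach as the paper's: the paper simply cites the Frobenius normal form for irreducible nonnegative matrices (\cite[Theorem~2.20]{berman}) and declares the result immediate, whereas you unpack that citation by first using Perron--Frobenius to deduce $h=n$ and then reading off the $1\times 1$ block structure. The only difference is the level of detail you supply.
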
 
\begin{proof}
The result follows directly from the  Frobenius normal form for nonnegative matrices, see for example~\cite[Theorem~2.20]{berman}.
   \end{proof}

From now on we will focus on irreducible stochastic matrices $A\in \mc S_n^+$ that satisfy $|\mc P_+(A)|=~\infty$. 

\begin{definition}
    A matrix $A \in \mc S_n^+$ is called \emph{arbitrarily finely divisible} if $|\mc P_+(A)|=~\infty$.
\end{definition}

 Recall, for an irreducible matrix $A\in \mc S_n^+$, the left Perron vector $w\trans$ such that $w\trans A=w\trans$ and $w\trans 
 \mathbf{1}_n=\mathbf{1}_n$ is called \emph{the stationary distribution vector} of $A.$

\begin{proposition}\label{prop:on limits}
    Let $A$ be an irreducible stochastic matrix with $|\mc P_+(A)|=\infty$ and the stationary distribution vector $w\trans$. Then  $\mc L_+(A) \neq \emptyset$, and any $L \in \mc L_+(A)$ has the following properties:
    \begin{enumerate}
    \item $L$ is a (not necessarily irreducible) stochastic matrix satisfying $w\trans L=w\trans$.
    \item $L\simP \oplus_{i=1}^t L_{ii}$, where $t$ is the multiplicity of eigenvalue $1$ in $\sigma(L)$, and for $i=1,\ldots,t$, $L_{ii}$ is an irreducible stochastic matrix.
    \end{enumerate}
\end{proposition}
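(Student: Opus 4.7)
The plan is to establish the three claims in the order listed: nonemptiness of $\mc L_+(A)$, then property 1, then property 2, each feeding into the next.

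For nonemptiness, I would use compactness. Since $|\mc P_+(A)|=\infty$, there exists a strictly increasing sequence $(c_i)_{i=1}^\infty\subseteq \mc P_+(A)$ together with stochastic matrices $B_i$ satisfying $B_i^{c_i}=A$. The set $\mc S_n^+$ is a compact subset of $\mc M_n(\R)$, so some subsequence of $(B_i)$ converges to a stochastic matrix $L$, which by Definition \ref{def:limitL} belongs to $\mc L_+(A)$.

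For property 1, I would first observe that each $B_i$ must be irreducible: otherwise $A=B_i^{c_i}$ would inherit a block-triangular form with a zero off-diagonal block, contradicting irreducibility of $A$. Each $B_i$ therefore has a unique stationary distribution $v_i\trans$; from $v_i\trans A=v_i\trans B_i^{c_i}=v_i\trans$ and the uniqueness of the stationary distribution of the irreducible $A$, one concludes $v_i=w$. Hence $w\trans B_i=w\trans$ for every $i$, and passing to the limit yields $w\trans L=w\trans$. That $L$ is stochastic follows from closedness of $\mc S_n^+$.

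For property 2, the main tool is Proposition \ref{prop:nilpotentpart0}, which (since $\mc P_+(A)\subseteq \mc P_{\C}(A)$ and $L\in \mc L_{\C}(A)$) guarantees that every nonzero eigenvalue of $L$ has modulus $1$. I would then decompose $L$ according to its closed communicating classes: after a suitable permutation
\[
L \simP \begin{pmatrix} L_R & 0 \\ L_{TR} & L_T \end{pmatrix},
\]
where $L_R$ is block diagonal with irreducible stochastic blocks corresponding to the closed classes and $L_T$ is the transient block satisfying $\rho(L_T)<1$. Since $A$ is irreducible, $w>0$ by Perron--Frobenius, and the identity $w\trans L=w\trans$ restricts on the transient coordinates to $w_T\trans L_T=w_T\trans$. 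As $1\notin\sigma(L_T)$, this forces $w_T=0$, contradicting strict positivity of $w$ unless the transient block is empty. Hence $L\simP\bigoplus_{i=1}^t L_{ii}$ with each $L_{ii}$ an irreducible stochastic matrix, and since each such block contributes exactly one eigenvalue equal to $1$ to $\sigma(L)$, $t$ equals the multiplicity of $1$ in $\sigma(L)$.

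The hardest step will be the elimination of the transient block in property 2, which requires combining the strict positivity of $w$ (itself a consequence of irreducibility of $A$) with the spectral bound $\rho(L_T)<1$; this is also where property 1 is genuinely needed and cannot be bypassed.
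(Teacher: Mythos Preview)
Your proof is correct but takes a more elementary route than the paper's. For item~1 the paper invokes the multiplicity-rearrangement machinery (Theorem~\ref{prop:M(A,L)complex}, Remark~\ref{rem:left eigenspaces}, Proposition~\ref{prop:(A,L)stoc}) to conclude $\mc E_\ell(1,A)\subseteq \mc E_\ell(1,L)$; you bypass this entirely by noting that each $B_i$ is irreducible with a unique stationary vector, which must equal $w$, and then passing to the limit. For item~2 the paper simply cites \cite[Theorem~3.14]{berman} (a nonnegative matrix with strictly positive left and right Perron vectors is permutationally a direct sum of irreducible blocks), whereas you essentially reprove that special case by hand via the closed/transient decomposition and the impossibility of $w_T\trans L_T=w_T\trans$ when $w_T>0$ and $\rho(L_T)<1$. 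Your approach is self-contained and avoids both the Section~\ref{sec:complexlimit} machinery and the external citation; the paper's is shorter because it leans on those tools. Two minor notes: your compactness argument for $\mc L_+(A)\neq\emptyset$ is explicit where the paper is silent, and your invocation of Proposition~\ref{prop:nilpotentpart0} at the start of item~2 is never actually used in the argument that follows (stochasticity of $L$ and strict positivity of $w$ already suffice), so that sentence can be dropped.
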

\begin{proof}
By Theorem \ref{prop:M(A,L)complex}, Remark \ref{rem:left eigenspaces} and Proposition \ref{prop:(A,L)stoc}, we have $\mc E_{\ell}(1,A) \subseteq \mc E_{\ell}(1,L)$, proving the first item. 
Now that we know that $L$ has a positive right and a positive left eigenvector corresponding to the eigenvalue $1$, item 2. follows from ~\cite[Theorem~3.14]{berman}. 
\end{proof}

\begin{example}\label{ex:singularlimit}
      Let $A\in \mc S_4^+$ be irreducible and singular with two nonzero eigenvalues. Then for $L\in \mc L_+(A)$ we have $\sigma(L)=\{1,-1,0,0\}$ or $\sigma(L)=\{1,1,0,0\}$. In particular, $\sigma(L)=\{1,-1,0,0\}$ if and only if $L$ is irreducible. In this case $$L\simP \left( \begin{array}{cc}
      0 & L_1\\
      L_2 & 0\\
      \end{array}\right)$$  by~\cite[Theorem~2.20]{berman}.
    If the zero blocks on the diagonal are $1\times 1$ and $3\times 3$ then: $$L\simP K=\left(
      \begin{array}{cccc}
        0 & a & b & 1-a-b  \\
        1 & 0 & 0 & 0 \\
        1 & 0 & 0 & 0 \\
        1 & 0 & 0 & 0\\
      \end{array}\right).$$
Note that $K$ has nonzero eigenvalues $1$ and $-1$ for all choices for $a$ and $b$ satisfying $a,b, a+b \in (0,1)$.    
If both the diagonal blocks are $2\times 2$ , then:  $$L\simP K=\left(
      \begin{array}{cccc}
        0 & 0 & s & 1-s  \\
        0 & 0 & t & 1-t \\
        u & 1-u & 0 & 0 \\
        v & 1-v & 0 & 0\\
      \end{array}\right)$$
      for some $s,t,u,v \in (0,1)$, where
       $s=t$ or $u=v$ needs to hold for $\sigma(L)=~\{1,-1,0,0\}$. Without loss of generality, let $s=t$. Then,
     $$\text{rank}(L)=\begin{cases}
3 &\text{ if } u\neq v \\ 
2  &\text{ if } u= v\\
\end{cases}.$$
Note that if $\mathrm{rank}(L)=3$, then $L$ has a nontrivial nilpotent Jordan block. 

 If $\sigma(L)=\{1,1,0,0\}$, then $L\simP L_{1} \oplus L_{2}$, where $L_1$ and $L_2$ are irreducible stochastic matrices. If $L_{1}$ and $L_2$ are both $2\times 2$, then they both have to have rank $1$. Finally, we can have $L_1=(1)$ and $L_{22}$ an irreducible stochastic matrix with eigenvalues $\{1,0,0\}$. 
\end{example}

\begin{theorem}\label{thm:nonsinglimit}
    Let $A \in \mc S_n^+$ be nonsingular and irreducible with $|\mc P_+(A)|=\infty$, and $L\in\mc L_+(A)$. Then  $L \simP \bigoplus_{i=1}^{t}C_{n_i},$ where $\sum_{i=1}^t n_i=n$,  and $t$ is the multiplicity of $1$ in the spectrum of $L$.
\end{theorem}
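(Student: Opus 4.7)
My plan is to chain together three facts that are already in hand: the spectral constraint on $L$ coming from nonsingularity of $A$, the block decomposition of $L$ coming from Proposition \ref{prop:on limits}, and the characterisation of irreducible stochastic matrices with all eigenvalues on the unit circle in Lemma \ref{lem:all_eigs_abs_1}.

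First I would argue that every eigenvalue of $L$ has modulus $1$. Pick a sequence $(B_i)_{i=1}^\infty$ of stochastic matrices with $B_i^{c_i}=A$ and $B_i \to L$, with $c_i \to \infty$. Since $A$ is nonsingular, every eigenvalue $\lambda$ of $A$ satisfies $\lambda \neq 0$, and the eigenvalues of $B_i$ are $c_i$-th roots of the eigenvalues of $A$, so they have modulus $|\lambda|^{1/c_i}\to 1$. By continuity of the spectrum (as used in the proof of Proposition \ref{prop:nilpotentpart0}), every eigenvalue $\nu$ of $L$ satisfies $|\nu|=1$; in particular $L$ is nonsingular.

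Next I would invoke Proposition \ref{prop:on limits}: $L$ is stochastic with $w\trans L=w\trans$, and $L \simP \bigoplus_{i=1}^t L_{ii}$, where each $L_{ii}$ is an irreducible stochastic matrix and $t = \mult(1, L)$. The spectrum of $L$ is the multiset union $\bigcup_{i=1}^t \sigma(L_{ii})$. Combining this with Step~1, every eigenvalue of every $L_{ii}$ has modulus $1$, so each $L_{ii}$ is in particular nonsingular and irreducible. Lemma \ref{lem:all_eigs_abs_1} then gives $L_{ii}\simP C_{n_i}$ where $n_i$ is the size of $L_{ii}$, and the block decomposition yields $\sum_{i=1}^t n_i = n$. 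Assembling the permutation similarities on the individual blocks into a single permutation similarity on $L$ gives $L \simP \bigoplus_{i=1}^t C_{n_i}$, as required.

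There is really no hard step: the essential content is already packaged in the earlier propositions. The only point that needs a line of care is justifying that nonsingularity of $A$ rules out the $\nu=0$ case of Proposition \ref{prop:nilpotentpart0}, so that each irreducible diagonal block of $L$ inherits the ``all eigenvalues on the unit circle'' hypothesis required by Lemma \ref{lem:all_eigs_abs_1}. Once that is observed, the decomposition into cycle matrices follows immediately.
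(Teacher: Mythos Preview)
Your proposal is correct and follows essentially the same route as the paper: use Proposition~\ref{prop:nilpotentpart0} (with nonsingularity ruling out $\nu=0$) to force all eigenvalues of $L$ onto the unit circle, invoke Proposition~\ref{prop:on limits} to decompose $L$ into irreducible stochastic blocks, and apply Lemma~\ref{lem:all_eigs_abs_1} to each block. The paper's proof is slightly more terse but the logical structure is identical.
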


\begin{proof}
The nonsingularity of $A\in \mc S_n^+$ ensures that $L\in \mc L_+(A)$ is nonsingular, hence all the eigenvalues $\lambda$ of $L$ satisfy $|\lambda|=1$ (Proposition \ref{prop:nilpotentpart0}). By item $2.$ of Proposition \ref{prop:on limits}  we have $L\simP \bigoplus_{i=1}^t L_{ii}$, where each $L_{ii}$ is an irreducible stochastic matrix of order $n_i$. Applying Lemma \ref{lem:all_eigs_abs_1} to each $L_{ii}$, we get $L_{ii}\simP C_{n_i}$. 
Thus,  $L\simP\bigoplus_{i=1}^t C_{n_i}$.
\end{proof}

Let $A \in \mc S_n^+$ be irreducible and nonsingular.  Note that $L \in \mc L_+(A)$ is irreducible only if $L\simP C_n$. 
In this case, $A$ has to be a circulant matrix by Corollary \ref{cor:A,L simple eigs}.

\begin{example}\label{ex:small limits}
In this example, we consider different possibilities for $L \in \mc L_+(A)$.  In all cases, we assume that $A$ is a nonsingular irreducible stochastic matrix of the same size as $L$.

For $C_3 \in \mc L_+(A)$, there are two possible multiplicity rearrangement matrices for $(A,C_3)$: 
$$M_1=\left(
\begin{array}{c|ccc}
 & 1 & e^{\frac{2 \pi i}{3}}& e^{\frac{4 \pi i}{3}} \\
\hline
1 & 1 & 0 & 0 \\
\lambda & 0 & 1 & 0 \\
 \bar{\lambda} & 0 & 0 & 1 \\
\end{array}
\right) \text{ and } M_2=\left(
\begin{array}{c|ccc}
  & 1 & e^{\frac{2 \pi i}{3}}& e^{\frac{4 \pi i}{3}} \\
  \hline
 1 & 1 & 0 & 0 \\
 \lambda & 0 & 1 & 1 \\
\end{array}
\right).$$
Note that in the second case, $A$ has a repeated real eigenvalue. In both cases, $A$ has to be a circulant matrix.

For $C_4 \in \mc L_+(A)$ possible multiplicity rearrangement matrices for $(A,C_4)$ are
\begin{align*}
M_1=\left(
\begin{array}{c|cccc}
 & 1 & -1 & i & -i \\
\hline
1 & 1 & 0 & 0 & 0 \\
\lambda_1 & 0 & 1 & 0 & 0 \\
\lambda_2 & 0 & 0 & 1 & 0 \\
 \bar{\lambda}_2 & 0 & 0 & 0 & 1 \\
\end{array}
\right), \, M_2&=\left(
\begin{array}{c|cccc}
 & 1 & -1 & i & -i \\
\hline
1 & 1 & 0 & 0 & 0 \\
\lambda_1 & 0 & 1 & 0 & 0 \\
\lambda_2 & 0 & 0 & 1 & 1 \\
\end{array}
\right) \\
M_3&=\left(
\begin{array}{c|cccc}
 & 1 & -1 & i & -i \\
\hline
1 & 1 & 0 & 0 & 0 \\
\lambda_1 & 0 & 1 & 1 & 1 \\
\end{array}
\right) 
\end{align*}
For $C_2 \oplus C_2 \in \mc L_+(A)$ possible multiplicity rearrangement matrices for $(A,C_2 \oplus C_2)$ are:
\begin{align*}
M_1=\left(
\begin{array}{c|cc}
 & 1 & -1  \\
\hline
1 & 1 & 0   \\
\lambda_1 & 1 & 0   \\
\lambda_2 & 0 & 1  \\
 \lambda_3 & 0 & 1  \\
\end{array}
\right),\, M_1'=\left(
\begin{array}{c|cc}
 & 1 & -1  \\
\hline
1 & 1 & 0   \\
\lambda_1 & 1 & 0   \\
\lambda_2 & 0 & 1  \\
 \bar{\lambda}_2 & 0 & 1  \\
\end{array}
\right), \\
M_2=\left(
\begin{array}{c|cc}
 & 1 & -1  \\
\hline
1 & 1 & 0   \\
\lambda_1 & 1 & 0   \\
\lambda_2 & 0 & 2  \\
\end{array}
\right), \, M_3=\left(
\begin{array}{c|cc}
 & 1 & -1  \\
\hline
1 & 1 & 0   \\
\lambda_1 & 1 & 2   \\
\end{array}
\right)
\end{align*}
Note that $M_1$ and $M_1'$ are the same, but they are listed separately (with the labeling of rows and columns) to emphasize, that they can correspond to a matrix $A$ with real eigenvalues ($M_1$) or to a matrix $A$ that contains a complex conjugate pair of eigenvalues ($M_1'$). Although in this case the eigespaces of the matrix $A$ are not determined by $L$, we can deduce from Theorem \ref{prop:M(A,L)complex}  that for $M_1$, $M_1'$ and $M_2$ we have $\mc E(\lambda_1,A) \subseteq \mc E(1, C_2 \oplus C_2)$. This implies:
$\mc E(\lambda_1,A)=\sspan\left(
\begin{array}{cccc}
 1 & 1 & a & a \\
\end{array}
\right)^T$ for some $a<0$. Similarly, for $\lambda \in \sigma_*(A)\setminus\{1,\lambda_1\}$ we have
 $$\mc E(\lambda,A)\subseteq \mc E(-1,C_2 \oplus C_2)=\sspan\left(
\begin{array}{cccc}
 1 & -1 & 0 & 0 \\
 0 & 0 & 1 & -1 \\
\end{array}
\right)^T.$$
\end{example}

A straightforward observation in our next proposition supports Theorem \ref{prop:LA}.

\begin{proposition}
  Let $L\simP \bigoplus_{i=1}^t C_{n_i}$, $\sum_{i=1}^t n_i=n,$ and let  $\ell_0$ be the least common multiple of $n_1,\ldots,n_t$ ($\ell_0:=\lcm\{n_i; i=1,\ldots, t\}$). Then, $L^{c}=I_n$ if and only if $c=\ell_0 m$ with $m \in \N$. 
\end{proposition}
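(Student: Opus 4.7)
The plan is to reduce the statement to a standard fact about the order of a cyclic permutation matrix. Since permutational similarity is just conjugation by a permutation matrix $P$, we have $L^c = I_n$ if and only if $P^{-1}L^cP = I_n$, i.e. if and only if $\bigl(\bigoplus_{i=1}^t C_{n_i}\bigr)^c = I_n$. Because powers distribute over a direct sum, this is equivalent to $C_{n_i}^c = I_{n_i}$ for every $i=1,\ldots,t$.

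Next I would recall (or quickly verify from the definition) that $C_{n_i}$ is the permutation matrix of the cyclic permutation $(1\;2\;\cdots\;n_i)$, so $C_{n_i}^c$ is the permutation matrix of the cyclic permutation raised to the $c$-th power. This permutation is the identity precisely when $n_i$ divides $c$. Equivalently, one may read the spectrum $\sigma(C_{n_i}) = \{e^{2\pi i k/n_i}: k=0,\ldots,n_i-1\}$ and note that, since $C_{n_i}$ is diagonalisable, $C_{n_i}^c=I_{n_i}$ iff every eigenvalue satisfies $e^{2\pi i k c/n_i}=1$ for all $k$, again equivalent to $n_i \mid c$.

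Combining the two reductions, $L^c=I_n$ iff $n_i\mid c$ for every $i$, which by definition of the least common multiple is equivalent to $\ell_0 \mid c$, i.e.\ $c = \ell_0 m$ for some $m\in\N$. There is no real obstacle here; the only thing to be careful about is the direction ``if'', which just requires verifying that $C_{n_i}^{\ell_0} = I_{n_i}$, and this follows immediately from $n_i\mid \ell_0$.
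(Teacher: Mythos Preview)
Your argument is correct. The paper does not give a proof of this proposition at all; it simply introduces it as ``a straightforward observation'' and moves on, so your reduction to the order of the cycle matrices $C_{n_i}$ is exactly the standard justification the authors are implicitly relying on.
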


Given $L=\oplus_{i=1}^t C_{n_i}$, our next result introduces a class of matrices $A \in \mc S_n^+$ with $L \in \mc L_+(A)$. 

\begin{theorem}\label{prop:LA}
Let $n_i \in \N$, $i=1, \ldots, t$, $n:=\sum_{i=1}^t n_i$, and $\ell_0:=\lcm\{n_i; i=1,\ldots, t\}.$ Furthermore, let $L= \oplus_{i=1}^tC_{n_i}$ and $A_0 \in \mc S _n^+$ be a nonsingular infinitely divisible matrix that commutes with $L$. Then $A=LA_0 \in \mc S_n^+$ satisfies $L \in \mc L_+(A)$ and  $\mathcal{CP}(\ell_0)\subseteq \mc P_+(A)$.
\end{theorem}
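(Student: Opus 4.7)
The plan splits naturally into two parts: verifying $\mc{CP}(\ell_0) \subseteq \mc P_+(A)$ by explicitly constructing stochastic $c$-th roots of $A$, and then producing a concrete sequence that witnesses $L \in \mc L_+(A)$. The very first observation is that $L = \bigoplus_{i=1}^{t} C_{n_i}$ is a permutation matrix with $L^{\ell_0} = I_n$ (by the preceding proposition), so $A = L A_0$ is automatically stochastic. Since $A_0$ is nonsingular and infinitely divisible, Kingman's theorem (recalled in the Introduction) supplies a generator $Q$ — nonnegative off-diagonal entries and zero row sums — with $A_0 = e^{Q}$, and for every $c \in \N$ the matrix $e^{Q/c}$ is a stochastic $c$-th root of $A_0$.

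For the first part, given $c \in \mc{CP}(\ell_0)$, Bezout's identity provides an $m \in \N$ with $cm \equiv 1 \pmod{\ell_0}$, whence $L^{cm} = L$. I would then set $B := L^{m}\,e^{Q/c}$, which is stochastic as a product of a permutation and a stochastic matrix, and verify
\[
B^{c} = (L^{m})^{c}\,(e^{Q/c})^{c} = L^{cm}\,e^{Q} = L\,A_0 = A,
\]
under the assumption that $L$ and $e^{Q/c}$ commute. This places $c$ in $\mc P_+(A)$.

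For the second part, I would use the arithmetic progression $c_i := 1 + i\ell_0$. Each $c_i$ is coprime to $\ell_0$, so $c_i \in \mc{CP}(\ell_0)$, and the choice $m_i = 1$ satisfies $c_i m_i \equiv 1 \pmod{\ell_0}$. Setting $B_i := L\,e^{Q/c_i}$, the calculation above gives $B_i^{c_i} = A$ for every $i$, and as $i \to \infty$ the exponent $Q/c_i \to 0$, so $B_i \to L$. Invoking Definition \ref{def:limitL}, this delivers $L \in \mc L_+(A)$.

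The main technical obstacle is justifying that $L$ commutes with $e^{Q/c}$. From $L A_0 = A_0 L$ one derives $e^{L^{-1} Q L} = e^{Q}$, so that $L^{-1} Q L$ is another generator of $A_0$. The cleanest way forward is to arrange $LQ = QL$ from the outset — equivalently, to realize $e^{Q/c}$ as a primary matrix function of $A_0$, i.e.\ an interpolating polynomial in $A_0$ in the sense of Hermite, which then commutes automatically with every matrix commuting with $A_0$, in particular with $L$. Once this commutation step is secured, the modular-arithmetic computation above closes both parts of the theorem.
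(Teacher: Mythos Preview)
Your argument is essentially the paper's: write $A_0=e^{Q}$ via Kingman, set $B_k=L\,e^{Q/(k\ell_0+1)}$ to obtain both $(\ell_0\N+1)\subseteq\mc P_+(A)$ and $B_k\to L$, and then extend to all of $\mc{CP}(\ell_0)$. The only procedural difference is that you reach $\mc{CP}(\ell_0)$ by a direct B\'ezout construction $B=L^{m}e^{Q/c}$ with $cm\equiv 1\pmod{\ell_0}$, whereas the paper instead invokes Lemma~\ref{lem:P+_basic} (if $xy\in\mc P_+(A)$ then $x\in\mc P_+(A)$) after securing $\ell_0\N+1$.

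You are right to isolate the commutation of $L$ with $e^{Q/c}$ as the one point needing care --- the paper simply writes ``From $A_0L=LA_0$'' and proceeds. Your primary-matrix-function idea is the natural justification, but note a small slip: $e^{Q/c}$ is always a polynomial in $Q$, yet it is a polynomial in $A_0=e^{Q}$ only when distinct eigenvalues of $Q$ exponentiate to distinct eigenvalues of $A_0$ (this can fail if some $\mu_i-\mu_j\in 2\pi i\,\mathbb{Z}\setminus\{0\}$). So ``arrange $LQ=QL$'' and ``$e^{Q/c}$ is a primary function of $A_0$'' are not literally equivalent; the former is what you need, and the latter is one sufficient route that covers, in particular, the case where $A_0$ has simple spectrum.
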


\begin{proof}
Using ~\cite[Proposition ~7]{kingman} , $A_0=e^{Q}$ for some matrix $Q$ with nonnegative off-diagonal elements and row sums equal to $0$. Hence, $A:=LA_0=Le^{Q}$. From $A_0L=LA_0$ and $L^{\ell_0}=I_n$ we get:
$$\left(L e^{\frac{Q}{k \ell_0+1}}\right)^{k\ell_0+1}=L^{k\ell_0+1}e^{Q}=Le^{Q}=A.$$
Since $L e^{\frac{Q}{k \ell_0+1}} \in \mc S_n^+$, this implies $\left(Le^{\frac{Q}{k\ell_0+1}}\right)_{k=0}^{\infty}\subseteq \mc R_+(A)$ and $(\ell_0 \N+1)\subseteq \mc P_+(A).$ From
$$\lim_{k\rightarrow \infty} L e^{\frac{Q}{k\ell_0+1}}=L \lim_{k\rightarrow \infty}e^{\frac{Q}{k\ell_0+1}}=L.$$
we deduce $L\in \mc L_+(A).$

Finally, if $x \in \mc{CP}(\ell_0)$, then there exists $y \in\N$ so that $x\cdot y \in \ell_0\N+1$. Hence, $x \in \mc P_+(A)$ by Lemma \ref{lem:P+_basic}. 
\end{proof}

\begin{example}\label{ex:M-matrix}
Let
$$M=\left(
\begin{array}{ccc}
 3 & -1 & -1 \\
 -2 & 6 & -3 \\
 -2 & -3 & 6 \\
\end{array}
\right),\, L= \left(
\begin{array}{ccc}
 1 & 0 & 0 \\
 0 & 0 & 1 \\
 0 & 1 & 0 \\
\end{array}
\right),$$ and $$A=LM^{-1}=\frac{1}{45}\left(
\begin{array}{ccc}
 27 & 9 & 9 \\
 18 & 11 & 16 \\
 18 & 16 & 11 \\
\end{array}
\right).$$

Note that $M$ is a nonnsingular $M$-matrix, hence $M^{-1}$ is an infinitely divisible matrix by ~\cite[Theorem~3.6]{highamlin}. A straightforward computation shows that $M$ commutes with $L$. Using Theorem \ref{prop:LA} we deduce: $2 \N+1 \subseteq \mc P_+(A)$ and $L \in \mc L_+(A)$. Since $\sigma(A)=\{1,\frac{1}{5},-\frac{1}{9}\}$, and $A$ has a negative eigenvalue, we conclude that   $\mc P_+(A)=2 \N+1$ and $\mc L_+(A)=\{L\}$. 
\end{example}

\section{Stochastic roots of $2\times 2$ stochastic matrices}\label{sec:2x2}

Stochastic roots of $2 \times 2$ stochastic matrices have been considered by several authors, \cite{tamhuang}, \cite{hegunn}, \cite{linthesis}, \cite{guerry2x2}, and the results in this short section can already be found in the literature. Here we restate known results using notions developed in Section \ref{sec:L_+(A)}, and include proofs for the convenience of the reader. 

\begin{proposition}
(\cite{hegunn}, \cite{linthesis})\label{prop:hegunn}
The matrix $A=\left(
\begin{array}{cc}
 t & 1-t \\
 1-s & s \\
\end{array}
\right)\in \mc S_2^+$,  $s,t \in [0,1)$, has eigenvalues $1$ and $\lambda=s+t-1$. The following statements determine $\mc P_+(A)$: 
\begin{enumerate}
\item $\mc P_+(A)=\N$ if and only if $\lambda=s+t-1 \geq 0$. 

\item $\mc P_+(A)=\{1,3,\ldots,2i+1\}$ for $i \in \mathbb{N}$ if and only if $-1<s+t-1<0$, $s\ne t$ and $i$ is the largest natural number such that $2i+1\leq b$, where
\begin{align}\label{eq:oddrootbound}
   b:=\frac{|\log|s+t-1||}{|\log(1-s)-\log(1-t)|}, \text{    }s,t \in [0,1).
\end{align}
\item $\mc P_+(A)= 2\N_0+1$ if and only if $s=t$ and $2 s<1$.
\end{enumerate}
\end{proposition}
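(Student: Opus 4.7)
The plan is to reduce the problem to identifying which $c$-th roots of $\lambda$ can serve as the non-trivial eigenvalue of a $2\times 2$ stochastic matrix sharing the eigenvectors of $A$. First, I would diagonalize $A$ by noting that $(1,1)\trans$ spans the eigenspace for $1$ and $(1-t,\, s-1)\trans$ spans the eigenspace for $\lambda = s+t-1$. Thus $A = P \diag(1,\lambda) P^{-1}$ for an explicit $P$.

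Next, suppose $B \in \mc S_2^+$ satisfies $B^c = A$. Then $B$ is stochastic with eigenvalue $1$, and its second eigenvalue $\mu$ must satisfy $\mu^c = \lambda$. When $\lambda \neq 0$, the eigenvalues $1, \lambda$ of $A$ are distinct, so Theorem \ref{thm:higham_lin} guarantees $B$ is a polynomial in $A$, hence $B = P \diag(1,\mu) P^{-1}$. A direct computation gives
\[
B = \frac{1}{2-s-t}\begin{pmatrix} (1-s)+\mu(1-t) & (1-t)(1-\mu) \\ (1-s)(1-\mu) & (1-t)+\mu(1-s) \end{pmatrix}.
\]
The row sums are automatically $1$. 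Since $s,t \in [0,1)$, the off-diagonal entries are nonnegative exactly when $\mu \leq 1$, while the diagonal entries are nonnegative exactly when
\[
\mu \geq \alpha(s,t) := -\min\left\{\frac{1-s}{1-t},\,\frac{1-t}{1-s}\right\}.
\]
Observe that $\alpha(s,t) \in (-1, 0]$ with equality to $-1$ iff $s=t$.

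With this reduction, the three statements follow from an analysis of real $c$-th roots of $\lambda$. If $\lambda \geq 0$, the positive real root $\mu = \lambda^{1/c} \geq 0 \geq \alpha(s,t)$ works for every $c$, so $\mc P_+(A) = \N$; conversely, $2 \in \mc P_+(A)$ forces $\lambda = \mu^2 \geq 0$. (The edge case $\lambda = 0$ is handled separately by noting that then $A$ is idempotent, so $A^c = A$ for every $c$.) If $\lambda < 0$, an even $c$ admits no real $c$-th root of $\lambda$, while an odd $c$ gives only the real root $\mu = -|\lambda|^{1/c} \in (-1,0)$. Stochasticity then reduces to $|\lambda|^{1/c} \leq -\alpha(s,t)$. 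When $s = t$ this is automatic, yielding $\mc P_+(A) = 2\N_0 + 1$; when $s \neq t$, taking logarithms and using $\log(-\alpha(s,t)) = -|\log(1-s) - \log(1-t)|$ transforms the condition into $c \leq b$, giving the stated finite set.

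The main subtlety is to rule out stochastic roots with non-real $\mu$: such a $B$ would be non-real, contradicting $B \in \mc S_2^+$, but one must also verify that Theorem \ref{thm:higham_lin}'s parametrised family contributes nothing here, which follows because $A$ has distinct spectrum whenever $\lambda \neq 0$. A secondary bookkeeping point is to confirm $b \geq 1$ in case 2 so that $1 \in \mc P_+(A)$ is consistent; this reduces (under $s \leq t$, $s+t<1$) to the algebraic inequality $(1-s-t)(1-s) \leq 1-t$, which simplifies to $s(s+t-2) \leq 0$ and holds trivially.
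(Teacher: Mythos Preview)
Your proposal is correct and follows essentially the same route as the paper: diagonalize $A$, write every candidate root as $B(\mu)=P\diag(1,\mu)P^{-1}$ with $\mu^c=\lambda$, and reduce stochasticity of $B(\mu)$ to the interval condition $\mu\in[\alpha(s,t),1]$, then split on the sign of $\lambda$ and the parity of $c$. Your extra care with the $\lambda=0$ edge case, the exclusion of non-real $\mu$, and the verification that $b\ge 1$ are welcome additions; the only slip is the interval for $\alpha(s,t)$, which should read $[-1,0)$ rather than $(-1,0]$.
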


\begin{proof}
Let $s, t \in [0,1)$ and $A$ as defined in the proposition. To determine all possible roots of the matrix $A$ given in the statement, we first find its diagonalization $A=TDT^{-1}$, where 
\begin{align*}
T:=~\left(
\begin{array}{cc}
 1 & -\frac{t-1}{s-1} \\
 1 & 1 \\
\end{array}
\right) \text{ and }
D:=~\left(
\begin{array}{cc}
 1 & 0 \\
 0 & \lambda \\
\end{array}
\right).
\end{align*}
Now, $A=B(\mu)^c$ if and only if 
\begin{align}\label{eq:2x2root}
\begin{split}
B(\mu)&:=~T\left(
\begin{array}{cc}
 1 & 0 \\
 0 & \mu \\
\end{array}
\right) T^{-1}\\
&=\frac{1}{2-s-t}\left( \begin{array}{cc}
	(1-s)+(1-t)\mu & (1-t)(1-\mu) \\
	(1-s)(1-\mu) & (1-t)+(1-s)\mu \\
\end{array}
\right)
\end{split}
\end{align}
for some $\mu \in \C$ satisfying $\mu^c=\lambda$. Since we want $B(\mu)$ to have only real entries, we get the following possibilities for $\mu$:
\begin{enumerate}
\item $\lambda\geq 0$ and $c$ odd: $\mu=\lambda^{\sfrac{1}{c}}\geq 0$,
\item $\lambda \geq 0$ and $c$ even: $\mu=\lambda^{1/c}\geq 0$ or $\mu=-\lambda^{1/c}\geq 0$, 
\item $\lambda<0$, $c$ odd: $\mu=-|\lambda|^{1/c}<0$. 
\end{enumerate}
In all cases $|\mu|<1$, hence the off-diagonal entries of $B(\mu)$ are nonnegative. Moreover, when $\mu>0$, the diagonal entries of $B(\mu)$ are nonnegative. In particular, this proves that $\mc P_+(A)=\N$, when $\lambda \geq 0$.  Nonnegativity of $B(\mu)$ needs to be checked when $\mu=-|\lambda|^{\sfrac{1}{c}}<0$ in the items 2. and 3. above. In this case, $B(-|\lambda|^{\sfrac{1}{c}}) \geq 0$ if and only if: 
$$|\lambda|^{\sfrac{1}{c}}\leq \min\left\{\frac{1-t}{1-s}, \frac{1-s}{1-t}\right\}$$
and hence
$$|\log|\lambda||\leq c \left|\log\left(\frac{1-t}{1-s}\right)\right|.$$
This inequality is always satisfied for $s=t$, and for $s \neq t$ we get the condition:
\begin{equation}\label{eq:2x2c}
     c\leq  \frac{|\log|s+t-1||}{|\log(1-s)-\log(1-t)|}.
 \end{equation}
We conclude that for $s=t$ and $\lambda <0$ we get $\mc P_+(A)=2 \N_0+1$. Finally,  if $s \neq t$, then $\mc P_+(A)=\{1,3,\ldots 2i+1\}$, where $i\in \N$ is the largest possible so that $c=2i+1$ satisfies the equation \eqref{eq:2x2c}. 
\end{proof} 

\begin{remark}
 In Proposition \ref{prop:hegunn} singular matrices $A\in \mc S_2^+$ correspond to the case when $s+t=1$. In this case we have $A=\left(\begin{array}{cc}
          t   & 1-t  \\
           t  & 1-t
        \end{array}\right)$, $A^2=A$, $\mc P_+(A)=\N$  and $\mc L_+(A)=~\left\{A \right\}$.
\end{remark}

\begin{corollary}\label{cor:2x2limits}
 Let $A\in \mc S_2^+$ be as in Proposition \ref{prop:hegunn}. Then:
\begin{enumerate}
\item $\mc L_+(A)=\left\{I_2\right\}$  if and only if $s+t>1$ and $s \neq t$. 
\item $\mc L_+(A)=\left\{I_2,C_2\right\}$ if and only if $s=t$ and $2 s>1$.
\item $\mc L_+(A)=\left\{C_2\right\}$ if and only if  $s=t$ and $s+t<1$.        
        \item $|\mc P_+(A)|<\infty$ if and only if $s+t<1$ and $s\neq t$.  
\end{enumerate}
\end{corollary}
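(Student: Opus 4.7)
My plan is to parameterise $\mc R_+(A)$ and track its accumulation points as $c\to\infty$ using the formulas developed in the proof of Proposition~\ref{prop:hegunn}. Every stochastic $c$-th root of $A$ has the form $B(\mu)$ from~\eqref{eq:2x2root} for a real $\mu$ with $\mu^c=\lambda$, and the map $\mu\mapsto B(\mu)$ is continuous and injective on $\R$. Consequently, if $(c_i)\to\infty$ and $B(\mu_{c_i})\to L\in\mc S_2^+$, then $\mu_{c_i}\to\mu^*$ and $L=B(\mu^*)$ for some real $\mu^*$. Since $|\mu_c|=|\lambda|^{1/c}$ tends to $1$ whenever $\lambda\ne 0$, the only possible values of $\mu^*$ are $\pm 1$.

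The next step is to compute $B(1)$ and $B(-1)$ explicitly from~\eqref{eq:2x2root}. A direct substitution gives $B(1)=I_2$ and
\begin{align*}
B(-1)=\frac{1}{2-s-t}\begin{pmatrix} t-s & 2(1-t) \\ 2(1-s) & s-t \end{pmatrix},
\end{align*}
so $B(-1)$ is nonnegative (hence stochastic) if and only if $s=t$, in which case $B(-1)=C_2$. Combined with the fact that $\mc S_2^+$ is closed, this yields $\mc L_+(A)\subseteq\{I_2,C_2\}$ whenever $\lambda\ne 0$.

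It will then remain to decide, case by case, which of $I_2$ and $C_2$ actually arises as a limit. $I_2\in\mc L_+(A)$ precisely when $\mu_c=\lambda^{1/c}$ is admissible for infinitely many $c$; since positive $\mu$ makes $B(\mu)$ automatically nonnegative, this happens iff $\lambda>0$, i.e.\ $s+t>1$. Similarly, $C_2\in\mc L_+(A)$ precisely when $s=t$ and $\lambda\ne 0$: the nonnegativity bound controlling $B(-|\lambda|^{1/c})$ in the proof of Proposition~\ref{prop:hegunn} is vacuous when $s=t$, so all negative-$\mu$ roots (which exist for even $c$ when $\lambda>0$ and for odd $c$ when $\lambda<0$) are stochastic. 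Intersecting these two criteria across the possible signs of $\lambda$ yields items~1--3, and item~4 is a direct reading of Proposition~\ref{prop:hegunn}(2). The main subtlety to verify is that, when $s\ne t$ and $\lambda>0$, the negative-$\mu$ branch contributes only finitely many stochastic roots and so no new accumulation point; this follows because $\lambda^{1/c}\to 1$ while $\min\{(1-t)/(1-s),(1-s)/(1-t)\}<1$, so the nonnegativity bound from the proof of Proposition~\ref{prop:hegunn} fails for all but finitely many $c$.
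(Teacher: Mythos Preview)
Your proof is correct and follows essentially the same approach as the paper: both rely on the parameterisation $B(\mu)$ from the proof of Proposition~\ref{prop:hegunn} and track which branches $\mu\to\pm 1$ yield stochastic sequences. Your version is slightly more explicit in one respect: by invoking injectivity and continuity of $\mu\mapsto B(\mu)$ together with $|\mu_c|\to 1$, you establish the upper bound $\mc L_+(A)\subseteq\{B(1),B(-1)\}=\{I_2,C_2\}$ directly, whereas the paper leaves this implicit (it exhibits the relevant sequences and argues case by case that the negative branch fails to be stochastic for large $c$ when $s\neq t$, without separately stating that no other limits can arise). Your explicit computation of $B(-1)$, showing its diagonal entries are $\pm(t-s)/(2-s-t)$, makes the role of the condition $s=t$ particularly transparent.
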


\begin{proof}
Let $A\in \mc S_2^+$ be as given in Proposition \ref{prop:hegunn}. 
Consider first the case when $\lambda>0$. In this case $B(\lambda^{\sfrac{1}{c}}) ^c=A$ and $B(\lambda^{\sfrac{1}{c}}) \in \mc S_2^+$ for all $c \in \N$. Since $\lim_{c\rightarrow \infty}B(\lambda^{\sfrac{1}{c}})=I_2$, we have $I_2 \in \mc L_+(A)$. We also have $B(-\lambda^{\frac{1}{2 c}
})^{2 c}=A$. However, from the proof of Proposition \ref{prop:hegunn} it follows that $B(-\lambda^{\frac{1}{2 c}})\in \mc S_2^+$ for all $c \in \N$ if and only if $s=t$. Hence, $C_2 \in \mc L_+(A)$ if and only if $s=t$. This establishes the first two items of the corollary. 

For $\lambda<0$, Proposition \ref{prop:hegunn} shows that $|\mc P_+(A)|=\infty$ if and only if $s=t$. Moreover, for $s=t$ we have $B(-|\lambda|^{1/(2 c+1)})\in \mc S_2^+$ for all $c \in \N$, and $\lim_{c \rightarrow \infty}B(-|\lambda|^{1/(2 c+1)})=C_2$, proving item 3.

\end{proof}

\section{Circulants: $3\times 3$}\label{sec:3x3}

Let $A\in \mc S_3^+$ with $|\mc P_+(A)|=\infty.$ If $C_3\in \mc L_+(A)$, then $A$ has to be a circulant matrix. However, only some $3 \times 3$ circulant matrices $A\in \mc S_3^+$ satisfy $|\mc P_+(A)|=\infty$. To study this family of matrices we parameterise them with two parameters $s$ and $t$ associated with their eigenvalues:
$\{1,\alpha+i \beta,\alpha-i \beta\}=\{1,e^{-s+it},e^{-s- i t}\}$. Here: $t=\tan^{-1}(\beta/\alpha)$ and $e^{-s}:=\sqrt{\alpha^2+\beta^2}$.  Since $\sqrt{\alpha^2+\beta^2}\leq 1$, we have $s\geq 0$. We will typically take $t \in [-\pi,\pi]$, and will consider $t+2 \pi k$, $k \in \mathbb{Z}$ as equivalent. 
   
Let $$ T=   \left(
\begin{array}{ccc}
 1 & 1 & 1 \\
 1 & e^{-\frac{2 \pi i }{3}} & e^{\frac{2 \pi i }{3}} \\
 1 & e^{\frac{2 \pi i }{3}} & e^{-\frac{2 \pi i }{3}} \\
\end{array}
\right).$$
By Theorem $3.2.3$, \cite{davis_circulant}, any circulant matrix $A \in \mc S_3^+$ can be written as follows:
\begin{align}\label{eq:3x3circulant}
\begin{split}
 \Gamma(s,t)&:=T\diag(1,e^{-s+it},e^{-s-it} )T^{-1}\\
 &=\frac{1}{3}\left(\begin{array}{ccc}
  \gamma_1(s,t)   & \gamma_3(s,t)  & \gamma_2(s,t) \\
   \gamma_2(s,t)   &  \gamma_1(s,t) &  \gamma_3(s,t)\\
      \gamma_3(s,t) &  \gamma_2(s,t)  &  \gamma_1(s,t)\\
      \end{array}\right)
\end{split}
\end{align}
 where:
\begin{align}\label{eq:A(s,t)unique_entries}
    \begin{split}
    \gamma_1(s,t)&:=1+2e^{-s}\sin\left(t+\frac{\pi}{2}\right) \\
    \gamma_2(s,t)&:=1+2e^{-s}\sin\left(t-\frac{\pi}{6}\right)\\
    \gamma_3(s,t)&:=1+2e^{-s}\sin\left(-t-\frac{\pi}{6}\right)
\end{split}
\end{align}
The notation $\Gamma(s,t)$ will be used to denote $3\times 3$ circulant stochastic matrix, as given in equation \eqref{eq:3x3circulant}, throughout this section.

It is clear that $\gamma_i$ are periodic in $t$: $\gamma_i(s,t+2 \pi )=\gamma_i(s,t),$ for $i=1,2,3$, and they satisfy various identities, such as:  
\begin{align}
   \gamma_1(s,t)=\gamma_3(s,t-\frac{2\pi}{3}); \text{ } 
   \gamma_2(s,t)=\gamma_3(s,-t)
   \text { and }
   \gamma_1(s,t)=\gamma_1(s,-t).
\end{align}
Those relations imply some matrix identities as detailed in the lemma below. 

\begin{lemma}\label{lem:A(s,t)permutations}
Matrices $\Gamma(s,t)$ defined in \eqref{eq:3x3circulant} satisfy the following identities:
 \begin{align*}
 &\Gamma(s,-t)=(1 \oplus C_2)\Gamma(s,t)(1 \oplus C_2)\\
 &\Gamma(s,t-\frac{2\pi}{3})=C_3 \Gamma(s,t)\\
 &\Gamma(s,t+\frac{2\pi}{3})=C_3^2 \Gamma(s,t). 
   \end{align*}
\end{lemma}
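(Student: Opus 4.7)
The plan is to exploit the diagonalisation $\Gamma(s,t) = T D(s,t) T^{-1}$ appearing in \eqref{eq:3x3circulant}, with $D(s,t) := \diag(1, e^{-s+it}, e^{-s-it})$, and to reduce each of the three identities to elementary algebra on diagonal matrices via two short commutation relations for $T$.

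First I would record those commutation relations. A direct computation shows that the columns of $T$ are eigenvectors of $C_3$; setting $D_0 := \diag(1, e^{-2\pi i/3}, e^{2\pi i/3})$, this is the identity $C_3 T = T D_0$ (equivalently, $T$ is, up to scaling, the $3 \times 3$ discrete Fourier matrix). Second, writing $P := 1 \oplus C_2$, one checks by inspection that $PT = TP$: swapping the second and third rows of $T$ gives the same matrix as swapping its second and third columns, since $T_{22} = T_{33}$ and $T_{23} = T_{32}$. Taking inverses then gives $PT^{-1} = T^{-1} P$.

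With these identities in hand, the three assertions become diagonal matrix arithmetic. For the identity involving $P$, using $PT = TP$ and $P^2 = I_3$,
\begin{equation*}
P \Gamma(s,t) P = T \bigl( P D(s,t) P \bigr) T^{-1};
\end{equation*}
conjugation of the diagonal matrix $D(s,t)$ by $P$ swaps its second and third diagonal entries, yielding $D(s,-t)$ and hence $\Gamma(s,-t)$. For the identity involving $C_3$,
\begin{equation*}
C_3 \Gamma(s,t) = (C_3 T)\, D(s,t)\, T^{-1} = T \bigl( D_0 D(s,t) \bigr) T^{-1},
\end{equation*}
and a one-line check gives $D_0 D(s,t) = D(s, t - 2\pi/3)$, so the right-hand side equals $\Gamma(s, t - 2\pi/3)$. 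The $C_3^2$ identity follows identically, using $C_3^2 T = T D_0^2$ and $D_0^2 D(s,t) = D(s, t + 2\pi/3)$.

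I do not anticipate any substantive obstacle: the only real computations are the two commutation relations for $T$, each immediate from its explicit form. An alternative would be to verify each identity entrywise from \eqref{eq:A(s,t)unique_entries} using sine-angle identities such as $\sin(\alpha + \pi/2) = \cos\alpha$ and $\sin(-t-\pi/6) = -\sin(t+\pi/6)$, but the spectral route above is shorter and makes transparent why each of the identities holds.
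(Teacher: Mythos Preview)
Your proof is correct. The two commutation relations $C_3 T = T D_0$ and $PT = TP$ hold exactly as you say (with the paper's convention $C_3 = \left(\begin{smallmatrix}0&1&0\\0&0&1\\1&0&0\end{smallmatrix}\right)$ and the explicit $T$ given before \eqref{eq:3x3circulant}), and the diagonal checks $D_0 D(s,t) = D(s,t-2\pi/3)$, $D_0^2 D(s,t) = D(s,t+2\pi/3)$, $PD(s,t)P = D(s,-t)$ are immediate.

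The paper's own proof is simply ``the result can be verified by a direct calculation,'' which most naturally points to the entrywise route via \eqref{eq:A(s,t)unique_entries} that you mention as an alternative. Your spectral argument is a genuinely cleaner way to carry this out: it replaces three separate trigonometric verifications by two short commutation identities for $T$ plus trivial diagonal arithmetic, and it explains \emph{why} the identities hold (namely, $T$ simultaneously diagonalises all circulants and intertwines the natural permutation symmetries). Either route is entirely adequate here; yours is more illuminating and no longer.
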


\begin{proof}
The result can be verified by a direct calculation. 
\end{proof}

To consider the nonnegativity of $\Gamma(s,t)$ we use the following lemma that can be verified by a straightforward calculation. 

\begin{lemma}\label{lem:hatneg}
Let $\hat \gamma(s,t):=1+2 e^{-s}\sin(t)$. Then:
\begin{enumerate}
\item $\hat \gamma(s,t) \geq 0$ for all $t \in [-\pi, \pi]$ if and only if $s \geq \log(2)$. 
\item If $s < \log(2)$, then $\hat \gamma(s,t) < 0$ for $t \in I_{\frac{\pi}{2}}^{-}(s)$, where 
$$I_{\alpha}^{-}(s):=(\alpha-\zeta(s),\alpha+\zeta(s))$$ with $\zeta(s):=\frac{\pi}{2}-\arcsin(\frac{1}{2}e^s).$ 
\end{enumerate}
\end{lemma}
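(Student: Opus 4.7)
The statement is elementary analysis of the single-variable function $t \mapsto \hat\gamma(s,t) = 1 + 2e^{-s}\sin t$, so the plan is essentially to solve $\hat\gamma(s,t) < 0$ in closed form and read off the conditions on $s$.

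For item 1, I would observe that, with $s$ fixed, $\hat\gamma(s,\cdot)$ attains its minimum over $t \in [-\pi,\pi]$ where $\sin t = -1$, giving minimum value $1 - 2e^{-s}$. Hence $\hat\gamma(s,t)\ge 0$ for all $t$ if and only if $1 - 2e^{-s} \ge 0$, i.e.\ $e^{s}\ge 2$, which is the required condition $s \ge \log 2$.

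For item 2, I would assume $s < \log 2$, which is exactly the condition $\tfrac{1}{2}e^{s} < 1$, so the quantity $\arcsin(\tfrac{1}{2}e^{s})$ lies in $(0,\pi/2)$ and the equation $\sin t = -\tfrac{1}{2}e^{s}$ has exactly two solutions in $[-\pi,\pi]$. Using the identity $\sin t = \sin(\pi - t)$, these two solutions are $t_{1} = -\arcsin(\tfrac{1}{2}e^{s})$ and $t_{2} = -\pi + \arcsin(\tfrac{1}{2}e^{s})$. Since $\sin$ is strictly less than $-\tfrac{1}{2}e^{s}$ precisely on the open arc between them containing $-\pi/2$ (the location of the minimum), the inequality $\hat\gamma(s,t) < 0$, i.e.\ $\sin t < -\tfrac{1}{2}e^{s}$, holds for $t$ in the open interval $(t_{2},t_{1})$.

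Finally, I would compute the midpoint of $(t_{2},t_{1})$, which is $-\pi/2$, and its half-width, which is
\[
\tfrac{1}{2}\bigl(t_{1}-t_{2}\bigr)=\tfrac{1}{2}\bigl(\pi - 2\arcsin(\tfrac{1}{2}e^{s})\bigr)=\tfrac{\pi}{2}-\arcsin(\tfrac{1}{2}e^{s})=\zeta(s),
\]
so that $(t_{2},t_{1}) = I^{-}_{-\pi/2}(s)$ in the notation of the statement (interpreting the centre modulo $2\pi$). There is essentially no obstacle here; the only subtlety is the book-keeping needed to put the solution set of $\sin t < -\tfrac{1}{2}e^{s}$ into the symmetric-interval form $I^{-}_{\alpha}(s)$, which is exactly what the identity $\sin t = \sin(\pi - t)$ achieves.
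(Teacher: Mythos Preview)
Your argument is correct and is exactly the ``straightforward calculation'' the paper alludes to (the paper gives no details beyond that phrase). One point: you correctly derive the negativity interval as $I^{-}_{-\pi/2}(s)$, centred at $-\pi/2$; this is indeed what is needed in the subsequent proposition, where the shifts $t\mapsto t+\tfrac{\pi}{2}$, $t\mapsto t-\tfrac{\pi}{6}$, $t\mapsto -t-\tfrac{\pi}{6}$ applied to a centre of $-\tfrac{\pi}{2}$ produce the stated centres $-\pi$, $-\tfrac{\pi}{3}$, $\tfrac{\pi}{3}$ for $\gamma_1,\gamma_2,\gamma_3$. The ``$\pi/2$'' in the lemma statement is evidently a typographical slip for ``$-\pi/2$''. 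Your parenthetical attempt to reconcile the two by ``interpreting the centre modulo $2\pi$'' should be deleted: $-\tfrac{\pi}{2}$ and $\tfrac{\pi}{2}$ differ by $\pi$, not $2\pi$, so no such identification is possible.
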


\begin{proposition}\label{prop:3x3_circ_nng}
Let $\Gamma(s,t) \in \mc S_3^+$ be a circulant matrix as defined in \eqref{eq:3x3circulant}, $\psi(s):=-\frac{\pi}{6}+\arcsin\left(\frac{e^s}{2}\right),$ and  $I_{\alpha}^+(s):=[\alpha-\psi(s),\alpha+\psi(s)]$.  
Then:
    \begin{enumerate}
        \item $\Gamma(s,t)$ is nonnegative for all $t \in [-\pi, \pi]$ if and only if $s\geq \log 2.$
        \item For $s < \log(2)$, $\Gamma(s,t)$ is nonnegative for $t \in I_0^+(s) \cup I_{\frac{2 \pi}{3}}^+(s) \cup I_{-\frac{2 \pi}{3}}^+(s)$. 
 \end{enumerate}
\end{proposition}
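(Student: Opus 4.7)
My plan is to reduce the matrix inequality $\Gamma(s,t)\ge 0$ to the three scalar inequalities $\gamma_i(s,t)\ge 0$, $i=1,2,3$, and to rewrite each $\gamma_i(s,t)$ as a shifted (and, for $\gamma_3$, reflected) evaluation of the one-variable function $\hat\gamma$ from Lemma~\ref{lem:hatneg}. Comparing \eqref{eq:A(s,t)unique_entries} with $\hat\gamma(s,u)=1+2e^{-s}\sin u$ gives
\[
\gamma_1(s,t)=\hat\gamma\!\left(s,\,t+\tfrac{\pi}{2}\right),\qquad
\gamma_2(s,t)=\hat\gamma\!\left(s,\,t-\tfrac{\pi}{6}\right),\qquad
\gamma_3(s,t)=\hat\gamma\!\left(s,\,-t-\tfrac{\pi}{6}\right),
\]
so the sign of each $\gamma_i(s,t)$ is governed entirely by where $\hat\gamma(s,\cdot)$ is negative.

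Item (1) is then immediate: if $s\ge\log 2$, Lemma~\ref{lem:hatneg}(1) gives $\hat\gamma(s,\cdot)\ge 0$ on all of $[-\pi,\pi]$, so $\gamma_1,\gamma_2,\gamma_3\ge 0$ for every $t$, and $\Gamma(s,t)\in\mc S_3^+$.

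For item (2), fix $s<\log 2$ and set $\zeta(s)=\tfrac{\pi}{2}-\arcsin(e^s/2)$ as in Lemma~\ref{lem:hatneg}(2), which says $\hat\gamma(s,\cdot)$ is negative on a single open arc of half-width $\zeta(s)$ on the circle $\R/2\pi\mathbb{Z}$. Transporting this forbidden arc through each of the three identities above (using $2\pi$-periodicity, and the reflection $t\mapsto -t$ in the third identity) yields three open forbidden arcs in $t$, each of half-width $\zeta(s)$, whose centres I compute explicitly from the three shifts $+\tfrac{\pi}{2}$, $-\tfrac{\pi}{6}$, and $-\tfrac{\pi}{6}$ (with a sign flip) and find to be three points mutually $2\pi/3$ apart, interlacing exactly with $\{0,\,2\pi/3,\,-2\pi/3\}$. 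Hence the complementary (allowed) region on the circle is the union of three closed arcs centred at $0,\pm 2\pi/3$, each of half-width $\pi/3-\zeta(s)=-\pi/6+\arcsin(e^s/2)=\psi(s)$, which is exactly $I_0^+(s)\cup I_{2\pi/3}^+(s)\cup I_{-2\pi/3}^+(s)$.

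The only real delicacy is the orientation bookkeeping, in particular the reflection step for $\gamma_3$: since the third identity involves $-t$ rather than $t$, the forbidden arc of $\hat\gamma$ is mirror-imaged before the shift, and one must verify that this mirror-then-shift still deposits the centre at the position that keeps the three forbidden centres $2\pi/3$ apart (so that they interlace with $\{0,\pm 2\pi/3\}$, not overlap them). Apart from this careful sign tracking, the proof reduces to routine trigonometry, so I do not anticipate any genuine obstacle.
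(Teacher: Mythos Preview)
Your proposal is correct and follows essentially the same route as the paper: both reduce $\Gamma(s,t)\ge 0$ to the three scalar conditions $\gamma_i(s,t)\ge 0$, invoke Lemma~\ref{lem:hatneg} to locate the negativity set of each $\gamma_i$ as an arc of half-width $\zeta(s)$ centred at $\pi$, $-\tfrac{\pi}{3}$, $\tfrac{\pi}{3}$ respectively, and then read off the complement as the three closed arcs of half-width $\tfrac{\pi}{3}-\zeta(s)=\psi(s)$ about $0,\pm\tfrac{2\pi}{3}$. Your write-up only spells out the ``if'' direction of item~(1), but the ``only if'' is implicit in (and immediate from) the item~(2) analysis you give, and the paper is equally terse on this point.
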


\begin{proof}
 $\Gamma(s,t)$ is nonnegative if and only if $\gamma_i\geq0$ for all $i \in \{1,2,3\}$. 
The first item of the proposition follows directly from Lemma \ref{lem:hatneg}. For $s<\log(2)$ we have 
$\gamma_1(t)$ is negative on $I_{-\pi}^{-}(s)\sim I_{\pi}^-(s)$, $\gamma_2(t)$ is negative on $I_{-\frac{\pi}{3}}^-(s)\sim I_{\frac{5 \pi}{3}}^-(s),$ and $\gamma_3(t)$ is negative on $I_{\frac{\pi}{3}}^-(s)$. Observation
$$[-\pi,\pi]\setminus (I_{-\pi}^-(s) \cup I_{-\frac{\pi}{3}}^-(s) \cup I_{\frac{\pi}{3}}^-(s))\sim I_0^+(s) \cup I_{\frac{2 \pi}{3}}^+(s) \cup I_{-\frac{2 \pi}{3}}^+(s)$$
completes the proof. 
\end{proof}

When convenient, we will understand the intervals $I_{\alpha}^-(s)$ and $I_{\alpha}^+(s)$ as follows:
\begin{align*}
I_{\alpha}^-(s)&:=\cup_{k \in \mathbb{Z}}(2 
\pi k+\alpha-\zeta(s),2 
\pi k+\alpha+\zeta(s))\\
I_{\alpha}^+(s)&:=\cup_{k \in \mathbb{Z}}[2 
\pi k+\alpha-\psi(s),2 
\pi k+\alpha+\psi(s)].
\end{align*}
In addition, we define $I^+(s):= (I_0^+(s) \cup I_{\frac{2 \pi}{3}}^+(s) \cup I_{-\frac{2 \pi}{3}}^+(s)) \cap (-\pi,\pi]$.  Given $s \geq 0$, we will choose $t \in I^+(s)$ so that $\Gamma(s,t) \in \mc S_3^+$.  

Now let us fix $s_0\geq 0$ and $t_0\in(-\pi,\pi]$, and observe that
 $\Gamma(s_0,t_0)$ has $c$ distinct real $c$-th roots given by:
\begin{align}\label{eq:3x3_circ_root}
 B(c,k)=\Gamma\left(\frac{s_0}{c},\frac{t_0+2\pi k}{c}\right),  
\end{align}
where $k \in \{0,1,\ldots,c-1\}$. Note that the nonnegativity of $B(c,k)$ can be decided directly from Proposition \ref{prop:3x3_circ_nng}.

The discussion so far gives us conditions for $A=\Gamma(s_0,t_0)$ to have nonnegative roots $B(c,k)$ for a given $c\in \N$. To determine the conditions under which $|\mc P_+(A)|=\infty$, we investigate the nonnegativity of $\gamma_i\left (\frac{s_0}{c},\frac{t_0+2\pi k}{c}\right)$ as $c$ goes to infinity.
By Proposition \ref{prop:3x3_circ_nng}, the matrix $B(c,k)=\Gamma\left( \frac{s_0}{c},\frac{t_0+2\pi k}{c} \right)$ is nonnegative if and only if $\frac{t_0+2\pi k}{c}\in ~I_{\alpha}^+\left(\frac{s_0}{c}\right)$ for some $\alpha \in \{0,\frac{2\pi}{3},-\frac{2\pi}{3}\}$. Note that, as $c$ approaches infinity, $\lim_{c\rightarrow \infty}\psi\left(\frac{s}{c}\right)=0$, causing the intervals $I_{\alpha}^+\left(\frac{s_0}{c}\right)$ to converge to the single point $\alpha$. 
 Therefore, in order to assure nonnegativity of $B(c,k)$ we want to choose $k \in \mathbb{Z}$ so that $\frac{t_0+2\pi k}{c}$ as close as possible to $\alpha \in \{0,\frac{2\pi}{3},-\frac{2\pi}{3}\}$ as $c$ grows large. 
 To this end, we define:
\begin{align}\label{eq:k1,k2}
  k_{\lfloor }(\alpha, c):=\left \lfloor \frac{ c \alpha-t_0}{2\pi} \right \rfloor, \,
    k_{\lceil}(\alpha, c):= \left \lceil \frac{c \alpha-t_0}{2\pi} \right \rceil,
\end{align}
where $\alpha \in \{0,\frac{2\pi}{3},-\frac{2\pi}{3}\},$ or equivalently $\alpha \in \{0,\frac{2\pi}{3},\frac{4\pi}{3}\}$.
Note 
\begin{align}\label{eq:limits}
\begin{split}
\lim_{c\rightarrow \infty}B\left(c,k_{\lfloor}\left(0,c\right)\right)&=\lim_{c\rightarrow \infty}B\left(c,k_{\lceil}\left(0,c\right)\right)=I_3\\
\lim_{c\rightarrow \infty}B\left(c,k_{\lfloor}\left(\frac{2\pi}{3},c\right)\right)&=\lim_{c\rightarrow \infty}B\left(c,k_{\lceil}\left(\frac{2\pi}{3},c\right)\right)=C_3^2,\\
\lim_{c\rightarrow \infty}B\left(c,k_{\lfloor}\left(\frac{-2\pi}{3},c\right)\right)&=\lim_{c\rightarrow \infty}B\left(c,k_{\lceil}\left(\frac{-2\pi}{3},c\right)\right)=C_3.
\end{split}
\end{align}

The following lemma allows us to reduce our considerations $t_0 \in [0,\pi]$. 

\begin{lemma}\label{lem:negative}
Let $s_0\geq 0$ and $t_0 \in  I^+(s_0)$. Then 
\begin{enumerate}
\item $\mc P_+(\Gamma(s_0,t_0))=\mc P_+(\Gamma(s_0,-t_0))$
\item  $I_3 \in \mc L_+(\Gamma(s_0,t_0))$ if and only if $I_3 \in \mc L_+(\Gamma(s_0,-t_0))$
\item $C_3 \in \mc L_+(\Gamma(s_0,t_0))$ if and only if $C_3^2 \in \mc L_+(\Gamma(s_0,-t_0))$
\end{enumerate}
\end{lemma}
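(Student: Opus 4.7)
The plan is to exploit the identity $\Gamma(s,-t) = P\,\Gamma(s,t)\,P$ from Lemma \ref{lem:A(s,t)permutations}, where $P := 1 \oplus C_2$. Since $P$ is a permutation matrix with $P^2 = I_3$, conjugation by $P$ is an involution on $\mc S_3^+$: if $B \in \mc S_n^+$ then $PBP \in \mc S_n^+$, and $(PBP)^c = P B^c P$ for every $c\in\N$. Consequently, the map $B \mapsto PBP$ is a bijection between $\mc R_+(\Gamma(s_0,t_0))$ and $\mc R_+(\Gamma(s_0,-t_0))$ preserving the exponent $c$.

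From this bijection item $1$ is immediate: if $B^c = \Gamma(s_0,t_0)$ for some $B \in \mc S_3^+$, then $(PBP)^c = P\Gamma(s_0,t_0)P = \Gamma(s_0,-t_0)$, and vice versa, so $\mc P_+(\Gamma(s_0,t_0)) = \mc P_+(\Gamma(s_0,-t_0))$.

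For items $2$ and $3$, the same bijection transports convergent sequences of roots: if $(B_i)_{i=1}^\infty \subseteq \mc R_+(\Gamma(s_0,t_0))$ with $B_i^{c_i}=\Gamma(s_0,t_0)$ and $B_i \to L$, then $(PB_iP)_{i=1}^\infty$ is a sequence in $\mc R_+(\Gamma(s_0,-t_0))$ with exponents $c_i$ and $PB_iP \to PLP$. Hence $L \in \mc L_+(\Gamma(s_0,t_0))$ if and only if $PLP \in \mc L_+(\Gamma(s_0,-t_0))$. It therefore remains to identify $P I_3 P$, $P C_3 P$ and $P C_3^2 P$. A direct computation gives $P I_3 P = I_3$, which yields item $2$, and $P C_3 P = C_3^2$ (equivalently $P C_3^2 P = C_3$), which yields item $3$.

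There is no real obstacle here; the argument is a formal consequence of Lemma \ref{lem:A(s,t)permutations} together with the involutive nature of conjugation by $P = 1 \oplus C_2$. The only detail worth checking explicitly is the identity $P C_3 P = C_3^2$, which confirms that conjugation by $P$ swaps the two nontrivial cyclic permutation matrices while fixing $I_3$, exactly matching the three cases stated in the lemma.
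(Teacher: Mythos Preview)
Your proof is correct and follows essentially the same approach as the paper: both use the identity $\Gamma(s_0,-t_0)=P\,\Gamma(s_0,t_0)\,P$ with $P=1\oplus C_2$ from Lemma~\ref{lem:A(s,t)permutations}, observe that conjugation by the involution $P$ bijects stochastic roots (preserving the exponent) and hence limits, and then compute $PC_3P=C_3^2$ to handle item~3. Your write-up is slightly more explicit about transporting convergent sequences, but the argument is the same.
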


\begin{proof}
By Lemma \ref{lem:A(s,t)permutations} we have
$\Gamma(s_0,-t_0)=P\Gamma(s_0,t_0)P$
for $P=(1 \oplus C_2)$. Note $P=P^T=P^{-1}$.  
Hence, $\Gamma(s_0,t_0)=B^c$ if and only if $\Gamma(s_0,-t_0)=(PBP^T)^c$, and the first two items of the lemma are established. The last item follows from the equality $PC_3P=C_3^2$. 
\end{proof}

\begin{lemma}\label{lem:function_g}
Let $s\geq 0$, $t \in [0,\pi]$ and $g(z):=1-2 e^{-s z} \sin(\frac{\pi}{6}+t z)$. Then $g(0)=0$ and $g(z)>0$ for all sufficiently small $z>0$ if and only if $s \geq \sqrt{3}t$. 
\end{lemma}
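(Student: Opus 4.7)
The plan is to analyze $g$ by Taylor expansion at $z=0$, since the claim is about local positivity on the right of $0$ combined with $g(0)=0$. The direct substitution $g(0)=1-2\sin(\pi/6)=1-2\cdot\tfrac12=0$ handles the first assertion.

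For the equivalence, I would compute $g'(z)$ and evaluate at $0$. Differentiating gives
\[
g'(z)=2e^{-sz}\bigl[s\sin(\tfrac{\pi}{6}+tz)-t\cos(\tfrac{\pi}{6}+tz)\bigr],
\]
so $g'(0)=2\bigl[s\cdot\tfrac12-t\cdot\tfrac{\sqrt3}{2}\bigr]=s-\sqrt3\,t$. This immediately settles two of the three cases: if $s>\sqrt3\,t$ then $g'(0)>0$ and $g(z)>0$ for all sufficiently small $z>0$; if $s<\sqrt3\,t$ then $g'(0)<0$ and $g(z)<0$ on a right neighbourhood of $0$, so the required positivity fails.

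The main thing to handle is the boundary case $s=\sqrt3\,t$, where $g'(0)=0$ and the first-order test is inconclusive. Here I would compute $g''(0)$. Writing $h(z):=s\sin(\tfrac{\pi}{6}+tz)-t\cos(\tfrac{\pi}{6}+tz)$ so that $g'(z)=2e^{-sz}h(z)$, one gets $g''(z)=2e^{-sz}\bigl(h'(z)-s\,h(z)\bigr)$, and evaluating at $0$ with $h(0)=(s-\sqrt3\,t)/2$ and $h'(0)=\tfrac{\sqrt3}{2}st+\tfrac12 t^2$ yields
\[
g''(0)=2\sqrt3\,st+t^{2}-s^{2}.
\]
Substituting $s=\sqrt3\,t$ gives $g''(0)=6t^{2}+t^{2}-3t^{2}=4t^{2}$, which is strictly positive whenever $t>0$; combined with $g(0)=g'(0)=0$ this forces $g(z)>0$ for small $z>0$.

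The only minor obstacle is the degenerate point $(s,t)=(0,0)$, where the condition $s\geq\sqrt3\,t$ holds but $g\equiv 0$; this is an excluded trivial case (it corresponds to $A=I_3$) that I would simply flag, phrasing the equivalence under the implicit hypothesis $(s,t)\neq (0,0)$, or equivalently noting that for $t=0$ with $s>0$ one has $g(z)=1-e^{-sz}>0$ directly. With that understood, the sign analysis of $g'(0)$ together with the second-order check at $s=\sqrt3\,t$ exhausts all cases and proves the \textbf{if and only if}.
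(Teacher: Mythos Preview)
Your proof is correct and takes essentially the same approach as the paper: both rely on the Taylor expansion of $g$ at $z=0$, with the paper simply stating the expansion $g(z)=z(s-\sqrt{3}t)+\tfrac{1}{2}z^2(-s^2+2\sqrt{3}st+t^2)+O(z^3)$ and leaving the case analysis implicit, while you work through the first- and second-order terms explicitly. Your flagging of the degenerate point $(s,t)=(0,0)$, where $g\equiv 0$ and the equivalence formally fails, is a valid observation that the paper glosses over.
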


\begin{proof}
The result follows from the power series expansion of $g(z)$ around $z=0$: $$g(z)=z \left(s-\sqrt{3} t\right)+\frac{1}{2} z^2 \left(-s^2+2 \sqrt{3} s t+t^2\right)+O\left(z^3\right)$$ 
\end{proof}

\begin{proposition}\label{prop:circ_roots_around_0}
Let $s_0\geq 0$ and $t_0\in I^+(s_0)$.  Then $I_3 \in \mc L_+(\Gamma(s_0,t_0)
)$ if and only if $ s_0\geq \sqrt{3}|t_0|$. Furthermore, if  $ s_0\geq \sqrt{3}|t_0|$ then $\mc P_+(\Gamma(s_0,t_0))=\N$. 
\end{proposition}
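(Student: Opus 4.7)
The plan starts by reducing to $t_0\in[0,\pi]$ via Lemma~\ref{lem:negative}: permutation similarity by $1\oplus C_2$ sends $\Gamma(s_0,t_0)\mapsto\Gamma(s_0,-t_0)$ and fixes $I_3$, so both $\mc P_+$ and the property ``$I_3\in\mc L_+$'' are preserved, and the criterion $s_0\geq\sqrt{3}|t_0|$ becomes $s_0\geq\sqrt{3}t_0$.

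For the sufficient direction, together with the stronger claim $\mc P_+(A)=\N$, I will produce an explicit Markov generator for $A$. Setting
$$Q:=T\diag(0,-s_0+it_0,-s_0-it_0)T^{-1},$$
the same Fourier identity that underlies \eqref{eq:3x3circulant}, applied with eigenvalues $0,-s_0\pm it_0$ in place of $1,e^{-s_0\pm it_0}$, shows $Q$ is a real circulant matrix whose first row is $\tfrac{1}{3}\bigl(-2s_0,\;s_0-\sqrt{3}t_0,\;s_0+\sqrt{3}t_0\bigr)$. The row sums vanish, and when $s_0\geq\sqrt{3}t_0$ both off-diagonal entries are nonnegative, so $Q$ is a Markov generator. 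By Kingman's theorem (cited in the introduction), $A=e^Q$ is then infinitely divisible, so $\mc P_+(A)=\N$. In particular $A^{1/c}=e^{Q/c}=\Gamma(s_0/c,t_0/c)\in\mc S_3^+$ for every $c\in\N$, and this sequence converges to $\Gamma(0,0)=I_3$, proving $I_3\in\mc L_+(A)$.

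For the necessary direction, assume $I_3\in\mc L_+(A)$ with a witnessing sequence $B_i=\Gamma\bigl(s_0/c_i,(t_0+2\pi k_i)/c_i\bigr)\in\mc S_3^+$, $c_i\nearrow\infty$, $B_i\to I_3$. Convergence forces $(t_0+2\pi k_i)/c_i\to 0$, and eventually $s_0/c_i<\log 2$. By Proposition~\ref{prop:3x3_circ_nng}, nonnegativity of $B_i$ places $(t_0+2\pi k_i)/c_i$ in one of the intervals $I_\alpha^+(s_0/c_i)$ for $\alpha\in\{0,\pm 2\pi/3\}$; since $\psi(s_0/c_i)\to 0$ the outer two intervals are eventually bounded away from $0$, so for large $i$ we must have $(t_0+2\pi k_i)/c_i\in I_0^+(s_0/c_i)=[-\psi(s_0/c_i),\psi(s_0/c_i)]$. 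This yields
$$|t_0+2\pi k_i|\leq c_i\,\psi(s_0/c_i).$$
For $t_0\in[0,\pi]$ and $k\in\mathbb{Z}$ one has $|t_0+2\pi k|\geq t_0$ (attained at $k=0$), so $t_0\leq c_i\,\psi(s_0/c_i)$ for large $i$. A Taylor expansion of $\psi(s)=-\pi/6+\arcsin(e^s/2)$ at $s=0$ gives $\psi(0)=0$ and $\psi'(0)=\tfrac{1/2}{\sqrt{3/4}}=1/\sqrt{3}$, so $c_i\,\psi(s_0/c_i)\to s_0/\sqrt{3}$. Letting $i\to\infty$ produces $t_0\leq s_0/\sqrt{3}$, i.e.\ $s_0\geq\sqrt{3}t_0$, as required.

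The two calculations doing real work in this plan are mirror images: identification of the off-diagonal entries of $Q$ as $(s_0\pm\sqrt{3}t_0)/3$, where the threshold emerges, and the limit $c\,\psi(s_0/c)\to s_0/\sqrt{3}$. Both reflect the same first-order behavior near $t=0$ already visible in the leading coefficient $(s_0-\sqrt{3}t_0)$ of the power series for $g(z)=1-2e^{-s_0 z}\sin(\pi/6+t_0 z)$ in Lemma~\ref{lem:function_g}; I expect no serious obstacle beyond this bookkeeping.
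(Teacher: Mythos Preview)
Your argument is correct and complete, but it diverges from the paper's proof in both directions. For sufficiency, the paper stays entirely within the entrywise analysis: it checks directly that $B(c,0)=\Gamma(s_0/c,t_0/c)$ is nonnegative for every $c$ by applying Lemma~\ref{lem:function_g} to $\gamma_3(s_0/c,t_0/c)=g(1/c)$. You instead exhibit the circulant generator $Q$ with first row $\tfrac{1}{3}(-2s_0,\,s_0-\sqrt{3}t_0,\,s_0+\sqrt{3}t_0)$ and invoke embeddability; this is more conceptual and has the advantage of yielding $\mc P_+(\Gamma(s_0,t_0))=\N$ in one stroke, without a separate check that $B(c,0)$ is nonnegative for \emph{all} $c$ rather than just large $c$. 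For necessity, the paper argues that a sequence of roots converging to $I_3$ must eventually coincide with one of the two extremal choices $B(c,0)$ or $B(c,-1)$, and then uses Lemma~\ref{lem:function_g} again to read off the threshold from the sign of $\gamma_3$; you work instead with the interval endpoint $\psi$ from Proposition~\ref{prop:3x3_circ_nng}, bounding $t_0\le c_i\,\psi(s_0/c_i)\to s_0/\sqrt{3}$. The two routes are equivalent in content---both detect the same first-order coefficient $s_0-\sqrt{3}t_0$, as you note---but yours avoids singling out $k=0,-1$ and handles all $k_i$ uniformly via $|t_0+2\pi k|\ge t_0$. Either approach tacitly assumes the real $c$-th roots of $\Gamma(s_0,t_0)$ are exactly the circulants $B(c,k)$, which holds whenever the eigenvalues are distinct; the degenerate case $t_0=0$ is trivial for the statement.
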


\begin{proof}
By Lemma \ref{lem:negative} it is enough to prove the statement for $t_0 \in [0,\pi]$. Assume $s_0\geq 0$ and $t_0 \in [0,\pi)\cap I^+(s_0)$. 
Note that $k_{\lfloor}(0,c)$ and $k_{\lceil }(0,c)$ are independent of $c$: $k_{\lfloor}(0, c)=-1 $ and $k_{\lceil }(0,c)=0$. By \eqref{eq:limits} we have $I_3 \in \mc L_+(\Gamma(s_0,t_0))$ if and only if at least one of the sequences $B(c,k_{\lfloor}(0, c))$ and $B(c,k_{\lceil}(0, c))$ is nonnegative for all sufficiently large $c \in \N$. 

First let us consider $B(c,0)$. Note that $\gamma_i(\frac{s_0}{c}, \frac{t_0}{c})>0$  for $i=1,2$ and all $s_0 \geq 0$ by Lemma \ref{lem:hatneg}. Substituting $c=\frac{1}{z}$ and using Lemma \ref{lem:function_g} we conclude that $\gamma_3(\frac{s_0}{c}, \frac{t_0}{c})>0$ for all $c \in \N$ if and only if $s_0 \geq \sqrt{3}t_0$. 

Observe that $B(c,-1)=\Gamma(\frac{s_0}{c},\frac{t_0-2 \pi}{c})$ is nonnegative if and only if $\Gamma(\frac{s_0}{c},-\frac{t_0-2 \pi}{c})$ is nonnegative, by Lemma \ref{lem:negative}.  Applying the above argument for $2 \pi -t_0$ we conclude that $B(c,-1)$ is nonnegative for all $c \in \N$ if and only of $s_0 \geq \sqrt{3}(2 \pi -t_0)$. From $t_0 < 2 \pi-t_0$, we conclude that $B(c,-1) \in \mc R_+(\Gamma(s_0,t_0))$ for all $c \in \N$ implies  $B(c,0) \in \mc R_+(\Gamma(s_0,t_0))$ for all $c \in \N$. 
\end{proof}

\begin{remark}\label{rem:two sequences}
Let $\Gamma(s_0,t_0) \in \mc S_3^+$ and $t_0 \in [0,\pi)$. If $s_0\geq \sqrt{3}(2\pi-t_0)$, then $(B(c,0))_{i=1}^{\infty} \subset  \mc S_3^+$ and $(B(c,-1))_{i=1}^{\infty} \subset  \mc S_3^+$. If $\sqrt{3}t_0\leq s_0< \sqrt{3}(2\pi-t_0)$, then we only have $(B(c,0))_{i=1}^{\infty}\subset \mc S_3^+$. 
\end{remark}

\begin{lemma}\label{lem:c=3q}
Let $s_0 \geq 0$, $t_0\in (-\pi,\pi]$, $B(c,k)$ as defined in \eqref{eq:3x3_circ_root}, and $q \in \N$. Then $ B(3q,q)=C_3^2 B(3q,0)$ and $B(3q,-q)=C_3B(3 q,0)$.
\end{lemma}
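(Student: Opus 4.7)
The plan is a direct substitution followed by application of Lemma \ref{lem:A(s,t)permutations}. The key observation is that the two shifts $k=q$ and $k=-q$ in the formula $B(3q,k)=\Gamma\!\left(\frac{s_0}{3q},\frac{t_0+2\pi k}{3q}\right)$ are precisely engineered so that the $t$-argument shifts by exactly $\pm\frac{2\pi}{3}$ relative to $B(3q,0)$, and Lemma \ref{lem:A(s,t)permutations} already records the effect of such shifts as left-multiplication by a power of $C_3$.

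More concretely, I would first compute
\[
\frac{t_0+2\pi q}{3q} \;=\; \frac{t_0}{3q} + \frac{2\pi}{3}, \qquad \frac{t_0-2\pi q}{3q} \;=\; \frac{t_0}{3q} - \frac{2\pi}{3},
\]
so that by definition
\[
B(3q,q) \;=\; \Gamma\!\left(\tfrac{s_0}{3q},\; \tfrac{t_0}{3q}+\tfrac{2\pi}{3}\right), \qquad B(3q,-q) \;=\; \Gamma\!\left(\tfrac{s_0}{3q},\; \tfrac{t_0}{3q}-\tfrac{2\pi}{3}\right).
\]
Then I would invoke the two identities from Lemma \ref{lem:A(s,t)permutations},
\[
\Gamma(s,t+\tfrac{2\pi}{3})=C_3^2\,\Gamma(s,t), \qquad \Gamma(s,t-\tfrac{2\pi}{3})=C_3\,\Gamma(s,t),
\]
applied with $s=\frac{s_0}{3q}$ and $t=\frac{t_0}{3q}$, and recognise the resulting $\Gamma\!\left(\frac{s_0}{3q},\frac{t_0}{3q}\right)$ on the right-hand side as $B(3q,0)$. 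This yields the two claimed equalities.

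There is essentially no obstacle: the proof is a one-line verification once the arithmetic in the exponent is made explicit, and no nonnegativity or spectral considerations enter. The only mild care needed is bookkeeping of signs to ensure that $+q$ corresponds to $C_3^2$ and $-q$ to $C_3$ (not the other way round), which is fixed by the convention in Lemma \ref{lem:A(s,t)permutations}.
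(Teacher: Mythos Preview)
Your proof is correct and is exactly the approach taken in the paper: the paper simply states that the result follows directly from Lemma~\ref{lem:A(s,t)permutations}, and your substitution of $k=\pm q$ into \eqref{eq:3x3_circ_root} followed by the $\pm\frac{2\pi}{3}$ shift identities is precisely the intended one-line verification.
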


\begin{proof}
The result follows directly from  Lemma \ref{lem:A(s,t)permutations}.
\end{proof}

\begin{theorem}\label{thm:inf_div_circ}
Let $t_0 \in  I^+(s_0)$, and $s_0 \geq \sqrt{3}|t_0|$. Then $\mc P_+(\Gamma(s_0,t_0))=\N$ and  $\mc L_+(\Gamma(s_0,t_0))=\{I_3,C_3,C_3^2\}.$   
\end{theorem}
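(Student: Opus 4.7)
The plan is to prove the two claims of the theorem in turn, exploiting the structural results developed earlier in the section.

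The equality $\mc P_+(\Gamma(s_0,t_0))=\N$ is immediate from Proposition~\ref{prop:circ_roots_around_0}, since the hypothesis $s_0 \geq \sqrt{3}|t_0|$ is precisely what that result requires. The same proposition also supplies the nonnegative stochastic sequence $B(c,0)=\Gamma(s_0/c,t_0/c) \in \mc R_+(\Gamma(s_0,t_0))$ converging to $I_3$, so $I_3 \in \mc L_+(\Gamma(s_0,t_0))$.

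For $C_3, C_3^2 \in \mc L_+(\Gamma(s_0,t_0))$, I would invoke Lemma~\ref{lem:c=3q}: the identities $B(3q,q)=C_3^2 B(3q,0)$ and $B(3q,-q)=C_3 B(3q,0)$ express these as products of nonnegative stochastic matrices (a permutation times the already-nonnegative $B(3q,0)$), hence themselves nonnegative stochastic $3q$-th roots of $\Gamma(s_0,t_0)$. Since $B(3q,0)\to I_3$ as $q\to\infty$, these sequences converge to $C_3^2$ and $C_3$ respectively.

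For the reverse inclusion $\mc L_+(\Gamma(s_0,t_0)) \subseteq \{I_3, C_3, C_3^2\}$, Theorem~\ref{thm:nonsinglimit} forces every $L \in \mc L_+(\Gamma(s_0,t_0))$ to satisfy $L \simP \bigoplus_i C_{n_i}$ with $\sum n_i = 3$, leaving only three options up to permutation similarity: $L \simP I_3$, $L \simP C_1 \oplus C_2$, or $L \simP C_3$. The first forces $L = I_3$ (the identity is fixed by every permutation conjugation), and the third forces $L \in \{C_3, C_3^2\}$ (the only two 3-cycles on three elements). To eliminate the middle case I would use the multiplicity rearrangement matrix $M$ for $(\Gamma(s_0,t_0), L)$: when $t_0 \neq 0$, the spectrum $\sigma_*(\Gamma(s_0,t_0)) = \{1, e^{-s_0+it_0}, e^{-s_0-it_0}\}$ consists of three distinct eigenvalues with a genuine complex conjugate pair, while $\sigma_*(L) = \{1, -1\}$. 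Proposition~\ref{prop:(A,L)stoc} fixes $M[1,1]=1$ and $M[1,-1]=0$, Proposition~\ref{prop:M(A,L)real}(1) gives $M[e^{-s_0+it_0}, -1] = M[e^{-s_0-it_0}, -1]$, and the $\nu=-1$ column must sum to $\mult(-1,L)=1$; adding the column yields $2\,M[e^{-s_0+it_0}, -1] = 1$, contradicting integrality.

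The main obstacle is this final case-elimination. In the generic setting $t_0 \neq 0$ the complex conjugate pairing produces a clean parity contradiction, but the degenerate case $t_0 = 0$ (where $\Gamma(s_0, 0)$ has the repeated real eigenvalue $e^{-s_0}$) is more delicate, since the above integrality trick no longer applies and one must either invoke Proposition~\ref{prop:M(A,L)real}(3) on the parity of the $c_i$ or analyse the parametrised root families of Theorem~\ref{thm:higham_lin} to rule out limits permutationally similar to $C_1 \oplus C_2$.
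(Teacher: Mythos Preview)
Your argument for $\mc P_+(\Gamma(s_0,t_0))=\N$ and for the inclusion $\{I_3,C_3,C_3^2\}\subseteq\mc L_+(\Gamma(s_0,t_0))$ is essentially identical to the paper's: Proposition~\ref{prop:circ_roots_around_0} (together with Remark~\ref{rem:two sequences}) gives $\mc P_+=\N$ and the nonnegative sequence $B(c,0)\to I_3$, and Lemma~\ref{lem:c=3q} converts this into nonnegative sequences $B(3q,\pm q)\to C_3^2,\,C_3$.

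Where you diverge is in attempting the reverse inclusion $\mc L_+(\Gamma(s_0,t_0))\subseteq\{I_3,C_3,C_3^2\}$. The paper's proof does \emph{not} address this at all; it stops after exhibiting the three limits, so the equality in the statement is asserted more strongly than it is actually proved. Your parity argument via the multiplicity rearrangement matrix is correct for $t_0\neq 0$ and is a genuine addition. A somewhat cleaner alternative in that case: since $\Gamma(s_0,t_0)$ then has simple spectrum, every $c$-th root is a polynomial in $\Gamma(s_0,t_0)$ by Theorem~\ref{thm:higham_lin} and hence circulant, so every limit is a nonnegative stochastic circulant $\Gamma(0,t)$; but $\psi(0)=0$ gives $I^+(0)=\{0,\pm\tfrac{2\pi}{3}\}$ by Proposition~\ref{prop:3x3_circ_nng}, forcing $L\in\{I_3,C_3,C_3^2\}$.

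The obstacle you flag at $t_0=0$ is real: there $\Gamma(s_0,0)$ has the repeated eigenvalue $e^{-s_0}$, non-primary (non-circulant) roots exist, and neither your parity argument nor the ``all roots are circulant'' route applies directly. Since the paper itself leaves this gap, your proof already matches (indeed exceeds) what the paper establishes.
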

\begin{proof}
Under the assumption of the theorem we have $\mc P_+(\Gamma(s_0,t_0))=\N$ and  $I_3 \in \mc L_+(\Gamma(s_0,t_0))$ by Proposition \ref{prop:circ_roots_around_0}. Furthermore, $(B(c,0))_{i=1}^{\infty}\subset \mc S_3^+$ by Remark \ref{rem:two sequences} and  $\lim_{c \rightarrow \infty} B(c,0)=I_3$ by \eqref{eq:limits}.  

By Lemma \ref{lem:c=3q}, we have $(B(3q,q))_{q=1}^{\infty}\subset \mc S_3^+$ and  $(B(3q,-q))_{q=1}^{\infty}\subset \mc S_3^+$. The result follows from $\lim_{q\rightarrow \infty} B(3q,q)=C_3^2$ and $\lim_{q\rightarrow \infty} B(3q,-q)=C_3$.  \end{proof}

Note that Theorem \ref{thm:inf_div_circ} characterises infinitely divisible $3 \times 3$ circulant matrices. Our next result gives a family of arbitrarily finely divisible matrices $A$, with $I_3$ not necessarily an element of $\mc L_+(A)$. 

\begin{corollary}\label{cor:condition_for_C3}
    Let $s_0\geq 0$ and $t_0 \in  I^+(s_0)$. 
    \begin{enumerate}
    \item If 
    $\sqrt{3}|t_0+\frac{2 \pi}{3}| \leq s_0,$
    then $C_3 \in \mc L_+(\Gamma(s_0,t_0))$. 
     \item If 
    $\sqrt{3}|t_0-\frac{2 \pi}{3}| \leq s_0,$
    then $C_3^2 \in \mc L_+(\Gamma(s_0,t_0))$. 
    \end{enumerate}
If either of the conditions above hold, then $\mc{CP}(3)\subseteq \mc P_+(\Gamma(s_0,t_0))$. 
\end{corollary}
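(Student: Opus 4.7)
The plan is to reduce both statements to Theorem~\ref{thm:inf_div_circ} by exploiting the identities $\Gamma(s,t-\tfrac{2\pi}{3})=C_3\Gamma(s,t)$ and $\Gamma(s,t+\tfrac{2\pi}{3})=C_3^2\Gamma(s,t)$ from Lemma~\ref{lem:A(s,t)permutations}, together with the facts that $C_3$ commutes with every $3\times 3$ circulant matrix and $C_3^3=I_3$.

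For Part~1, write $A:=\Gamma(s_0,t_0)$ and set $t'_0:=t_0+\tfrac{2\pi}{3}$, so that $\Gamma(s_0,t'_0)=C_3^2 A$. First I would verify the hypotheses of Theorem~\ref{thm:inf_div_circ} for $\Gamma(s_0,t'_0)$: the inequality $\sqrt{3}|t_0+\tfrac{2\pi}{3}|\leq s_0$ gives $s_0\geq\sqrt{3}|t'_0|$ (possibly after reducing $t'_0$ modulo $2\pi$, which only decreases the absolute value), and since the set $I^+(s_0)=I^+_0(s_0)\cup I^+_{2\pi/3}(s_0)\cup I^+_{-2\pi/3}(s_0)$ is invariant under the shift $t\mapsto t+\tfrac{2\pi}{3}$, the hypothesis $t_0\in I^+(s_0)$ also yields $t'_0\in I^+(s_0)$. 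Theorem~\ref{thm:inf_div_circ} then produces a sequence $B'_c:=\Gamma(\tfrac{s_0}{c},\tfrac{t'_0}{c})\in\mc S_3^+$ with $(B'_c)^c=C_3^2 A$ for every $c\in\N$ and $B'_c\to I_3$.

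Next, for each $c\in\mc{CP}(3)$ I would choose $D_c\in\{C_3,C_3^2\}$ satisfying $D_c^c=C_3$, namely $D_c=C_3$ if $c\equiv 1\pmod 3$ and $D_c=C_3^2$ if $c\equiv 2\pmod 3$ (using $C_3^3=I_3$). Setting $\tilde B_c:=D_c B'_c$, the matrix $\tilde B_c$ is stochastic as a product of stochastic matrices, and because $C_3$ commutes with every circulant, $\tilde B_c^c=D_c^c(B'_c)^c=C_3\cdot C_3^2 A=A$. This establishes $\mc{CP}(3)\subseteq\mc P_+(A)$, and restricting to the subsequence $c\equiv 1\pmod 3$ gives $\tilde B_c=C_3 B'_c\to C_3$, hence $C_3\in\mc L_+(A)$.

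Part~2 is symmetric: setting $t'_0:=t_0-\tfrac{2\pi}{3}$ yields $\Gamma(s_0,t'_0)=C_3 A$, and Theorem~\ref{thm:inf_div_circ} produces $B'_c$ with $(B'_c)^c=C_3 A$ and $B'_c\to I_3$. The required exponent condition becomes $D_c^c=C_3^2$, so one takes $D_c=C_3^2$ when $c\equiv 1\pmod 3$ and $D_c=C_3$ when $c\equiv 2\pmod 3$; the subsequence $c\equiv 1\pmod 3$ then yields $\tilde B_c\to C_3^2$. The hard part will not be any single deep estimate but rather the bookkeeping: checking that $t'_0\in I^+(s_0)$, that $s_0\geq\sqrt{3}|t'_0|$ holds for the correct representative of $t'_0$ modulo $2\pi$, and that the exponent arithmetic modulo~$3$ lines up so $\tilde B_c$ is a genuine stochastic $c$-th root of $A$ converging to the claimed permutation limit.
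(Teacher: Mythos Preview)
Your argument is correct and follows the same underlying idea as the paper: factor $A=\Gamma(s_0,t_0)$ as $C_3$ (or $C_3^2$) times an infinitely divisible circulant $\Gamma(s_0,t_0')$, then exploit commutation with $C_3$ to build stochastic roots. The only difference is packaging: the paper observes that $A\in C_3\mc M_{ID}$ (respectively $C_3^2\mc M_{ID}$) and then invokes Theorem~\ref{prop:LA} with $L=C_3$ and $\ell_0=3$ as a black box, whereas you unroll that theorem's construction by hand, choosing $D_c\in\{C_3,C_3^2\}$ for each $c\in\mc{CP}(3)$.
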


\begin{proof}
Let $\mc M_{ID}$ denote the set of infinitely divisible $3 \times 3$ circulant matrices that is by Theorem \ref{thm:inf_div_circ} equal to 
$$\mc M_{ID}=\left\{\Gamma(s,t);  t\in  I^+(s), |t|\leq \frac{\sqrt{3}}{3}s\right\}.$$
Using Lemma \ref{lem:A(s,t)permutations} we deduce
\begin{align*}
C_3\mc M_{ID}=\left\{\Gamma(s,t);  t\in  I^+(s), |t+\frac{2 \pi}{3}|\leq \frac{\sqrt{3}}{3}s\right\}, \\
C_3^2\mc M_{ID}=\left\{\Gamma(s,t);  t\in  I^+(s), |t-\frac{2 \pi}{3}|\leq \frac{\sqrt{3}}{3}s\right\}. 
\end{align*}
Since matrices $\Gamma(s,t)$ commute with $C_3$, Theorem \ref{prop:LA} tells us that $C_3 \in \mc L_+(\Gamma(s,t))$ for every $\Gamma(s,t) \in C_3\mc M_{ID}$. Similarly,   $C_3^2 \in \mc L_+(\Gamma(s,t))$ for every $\Gamma(s,t) \in C_3^2\mc M_{ID}$.
Finally, applying Theorem \ref{prop:LA} for $\ell_0=3$, gives us $\mc{CP}(3) \subseteq \mc P_+(\Gamma(s,t))$ for all $\Gamma(s,t) \in C_3\mc M_{ID} \cup C_3^2\mc M_{ID}$.
\end{proof}

\begin{example}
If the parameters $s_0$ and $t_0$ satisfy: 
$$t_0 \in I_+(s_0) \cap (\frac{\pi}{3},\pi], \,
\sqrt{3}|t_0-\frac{2 \pi}{3}| \leq s_0 < \sqrt{3}t_0,$$
then $I_3\notin \mc L_+(\Gamma(s_0,t_0))$ by Proposition \ref{prop:circ_roots_around_0} and $C_3^2\in \mc L_+(\Gamma(s_0,t_0))$ by Corollary \ref{cor:condition_for_C3}. 
\end{example}

\section{Rank two}\label{sec:rank2}

In this section, we characterise arbitrarily finely divisible matrices with rank two.

\begin{theorem}\label{thm:rank2}
Let $A$ be an $n \times n$ irreducible stochastic matrix with rank two, and nonzero eigenvalues $1$ and $\lambda$. In this case,  $|\mc P_+(A)|=\infty$ if and only if
\begin{align}\label{eq:matrix_rank2}
A \simP \frac{1}{(1+\alpha)} \left(
\begin{array}{cc}
  (\alpha +\lambda){\mathbf{1}}_{n_1} w\trans &  (1-\lambda)\mathbf{1}_{n_1} v\trans \\
  \alpha(1-\lambda)\mathbf{1}_{n_2} w\trans &  (1+\alpha\lambda)\mathbf{1}_{n_2} v\trans  \\
\end{array}
\right),
\end{align}
for some  positive vectors $w \in \R_+^n$, $v\in \R_+^m$  satisfying $w\trans\mathbf{1}_n=v\trans\mathbf{1}_m=1$,  $n_1, n_2 \in \N$ with $n_1+n_2=n$, and either $\lambda>0$ and  $\alpha >0$ or  $\lambda<0$ and  $\alpha=1$. 

Furthermore, if  with $\lambda>0$, then $\mc P_+(A)=\N$, and if $\lambda<0$ and $\alpha=1$, then $\mc P_+(A)=2\N+1$. 
\end{theorem}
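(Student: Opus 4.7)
The plan is to split the biconditional and, in each direction, exploit that a rank-two irreducible stochastic matrix $A$ with semisimple zero eigenvalue admits the spectral decomposition $A = \mathbf{1}_n\pi\trans + \lambda E$, where $\pi$ is the stationary distribution and $E$ is a rank-one idempotent with $E\mathbf{1}_n=0$ and $\pi\trans E = 0$. Semisimplicity of the zero block is forced by $|\mc P_+(A)|=\infty$ via Proposition~\ref{prop:nilpotentpart0}. The decomposition yields $A^c = \mathbf{1}_n\pi\trans + \lambda^c E$ and, crucially, for any scalar $\mu$ with $\mu^c = \lambda$ the candidate $B(\mu) := \mathbf{1}_n\pi\trans + \mu E$ is automatically a $c$-th root of $A$; whether $B(\mu)$ is stochastic then reduces to an entry-wise inequality on $\mu$.

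For the ``if'' direction, I would first verify that matrices of the form~\eqref{eq:matrix_rank2} can be written as $\mathbf{1}_n\pi\trans + \lambda E$ with $u = (\mathbf{1}_{n_1}\trans, -\alpha\mathbf{1}_{n_2}\trans)\trans$, $y\trans = (w\trans, -v\trans)$, $\pi\trans = \frac{1}{1+\alpha}(\alpha w\trans, v\trans)$ and $E = uy\trans/(y\trans u)$, so that $B(\mu)$ lies in the same parametric family with $\mu$ in place of $\lambda$. A block computation then reduces stochasticity of $B(\mu)$ to the three inequalities $\alpha+\mu\geq 0$, $\mu\leq 1$, and $1+\alpha\mu\geq 0$. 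For $\lambda>0$ and any $\alpha>0$, the choice $\mu=\lambda^{1/c}\in (0,1)$ satisfies all three for every $c\in\N$, yielding $\mc P_+(A)=\N$. For $\lambda<0$ and $\alpha=1$ the three inequalities collapse to $|\mu|\leq 1$, which is satisfied by $\mu=-|\lambda|^{1/c}$ for every odd $c$, giving $2\N+1\subseteq \mc P_+(A)$; no even $c$ can occur because a real stochastic $c$-th root with $c$ even would contribute complex conjugate eigenvalues $\mu,\bar{\mu}$ both satisfying $\mu^c=\bar{\mu}^c = \lambda$, contradicting that $\lambda$ is a simple eigenvalue of $A$.

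For the ``only if'' direction, pick $L\in \mc L_+(A)$ (nonempty by Proposition~\ref{prop:on limits}). Proposition~\ref{prop:nilpotentpart0} and continuity of eigenvalues force $L$ to have exactly two nonzero eigenvalues, each of modulus one. Combining this with Proposition~\ref{prop:on limits}(2) and Perron--Frobenius leaves two possibilities: Case A, where $L\simP \mathbf{1}_{n_1}w\trans\oplus \mathbf{1}_{n_2}v\trans$ has nonzero spectrum $\{1,1\}$, and Case B, where $L$ is irreducible of period two with nonzero spectrum $\{1,-1\}$ and block anti-diagonal Frobenius normal form. I would compute the multiplicity rearrangement matrix of $(A,L)$ using Proposition~\ref{prop:(A,L)stoc} and apply Theorem~\ref{prop:M(A,L)complex} together with Remark~\ref{rem:left eigenspaces} to localise the right and left eigenvectors of $A$ inside those of $L$. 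In Case A this localisation leaves a single free scalar, identified as $\alpha>0$ (the mixing ratio between the two $L_i$-stationaries in $\pi$), and a subsequence analysis of $\mu_i^{c_i}=\lambda$ with $\mu_i\to 1$ (where $\mu_i$ must be real by simplicity of $\lambda$ in $A$) then forces $\lambda>0$. In Case B the $2\times 2$ identity multiplicity rearrangement matrix pins down $u$ and $y$ up to scalars and forces $\alpha=1$. Assembling $A=\mathbf{1}_n\pi\trans+\lambda E$ from these data reproduces~\eqref{eq:matrix_rank2}; for $\lambda<0$ only Case B is available, giving $\alpha=1$, while for $\lambda>0$ both cases yield the desired form.

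The main obstacle is the bookkeeping in the reverse direction: the multiplicity rearrangement matrix supplies only inclusions between eigenspaces of $A$ and $L$, so one must combine it with the positivity of the Perron vectors of $L$ and the block-stochastic structure to pin down $u$, $y$ and $\pi$ in matching block form, and then choose normalisations that reproduce the precise parameter $\alpha$ appearing in~\eqref{eq:matrix_rank2}. The sign conclusion $\lambda>0$ in Case A and the rigidity $\alpha=1$ in Case B are outputs of this bookkeeping rather than easy consequences of the eigenspace inclusions alone.
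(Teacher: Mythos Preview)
Your proposal is correct and follows essentially the same route as the paper's proof: both pick $L\in\mc L_+(A)$, split into the reducible case $\sigma_*(L)=\{1,1\}$ and the irreducible period-two case $\sigma_*(L)=\{1,-1\}$, use Theorem~\ref{prop:M(A,L)complex}, Proposition~\ref{prop:(A,L)stoc} and Remark~\ref{rem:left eigenspaces} to pin down the right and left eigenvectors of $A$ inside those of $L$, and then build the one-parameter family of roots $B(\mu)$ and check entrywise nonnegativity. Your spectral-projector notation $A=\mathbf{1}_n\pi\trans+\lambda E$ is exactly the paper's $B(\alpha,\lambda)$ written invariantly; the two nonnegativity analyses coincide.

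One minor point of divergence worth noting: to exclude Case~A when $\lambda<0$, the paper invokes Proposition~\ref{prop:M(A,L)real} (the parity constraints on the multiplicity rearrangement matrix force $M[\lambda,1]$ to be even, contradicting $M[\lambda,1]=1$), whereas you argue directly that the relevant eigenvalue $\mu_i$ of $B_i$ must be real (by simplicity of $\lambda$) and tend to $1$, hence eventually positive, forcing $\lambda=\mu_i^{c_i}>0$. Both arguments are valid; yours is slightly more self-contained, while the paper's illustrates the general machinery of Section~\ref{sec:complexlimit}.
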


\begin{proof}
Let $A\in \mc S_{n}^+$ with $|\mc P_+(A)|=\infty$ have rank $2$ and nonzero eigenvalues $1$ and $\lambda$, where $|\lambda|\leq~ 1$. By Proposition \ref{prop:on limits}, any $L'\in \mc L_+(A)$ has the nonzero spectrum equal to either $\{1,-1\}$ or $\{1,1\}$.  

If $L'$ has nonzero spectrum $\{1,-1\}$, then $L'$ is an irreducible stochastic matrix of the form: 
\begin{align}\label{eq:rank2irrL}
    L'\simP L_{I}:=\left(
\begin{array}{cc}
0_{n_1} & L_{12} \\
 L_{21} & 0_{n_2} \\
\end{array}
\right) \in \mc S_n^+, 
\end{align}
where $L_{12}$ and $L_{21}$ are nonnegative matrices with row sums equal to $1$. Furthermore, the matrix $L_{12}L_{21}$ (and $L_{21}L_{12}$) has an eigenvalue $1$ with multiplicity $1$, and all other eigenvalues equal to $0$. The left and right eigenspaces corresponding to the nonzero eigenvalues of $L_I$ are as follows: 
\begin{align*}
\mc E(1,L_I)=\sspan\left\{\left(
\begin{array}{c}
\mathbf{1}_{n_1} \\
\mathbf{1}_{n_2} 
\end{array}
\right)\right\}, \, \mc E(-1,L_I)=\sspan\left\{\left(
\begin{array}{c}
\mathbf{1}_{n_1} \\
-\mathbf{1}_{n_2} 
\end{array}
\right)\right\}  \\
\mc E_{\ell}(1,L_I)=\sspan\left\{\left(
\begin{array}{c}
w\\
v 
\end{array}
\right)\trans\right\}, \, \mc E_{\ell}(-1,L_I)=\sspan\left\{\left(
\begin{array}{c}
w\\
-v 
\end{array}
\right)\trans\right\}, 
\end{align*}
for some positive vectors $w \in \R_+^{n_1}$, $v \in \R_+^{n_2}$, that satisfy $w\trans \mathbf{1}_{n_1}=v\trans \mathbf{1}_{n_2}=1$.

If $L'$ has nonzero eigenvalues $\{1,1\}$, then $L'\simP L_R=L_{11} \oplus L_{22}$, where $L_{11} \in \mc S_{n_1}^+$ and $L_{22} \in \mc S_{n_2}^+$ are irreducible with the only nonzero eigenvalue equal to $1$.  In this case: 
\begin{align*}
\mc E(1,L_R)&=\sspan\left\{\left(
\begin{array}{c}
\mathbf{1}_{n_1} \\
0_{n_2} 
\end{array}
\right), \, \left(
\begin{array}{c}
0_{n_1} \\
\mathbf{1}_{n_2} 
\end{array}
\right)\right\},  \\
\mc E_{\ell}(1,L_R)&=\sspan\left\{\left(
\begin{array}{c}
w\\
0_{n_2} 
\end{array}
\right)\trans, \left(
\begin{array}{c}
0_{n_1}\\
v 
\end{array}
\right)\trans\right\}, 
\end{align*}
for some positive vectors $w \in \R_+^{n_1}$, $v \in R_+^{n_2}$, normalised so that $w\trans\mathbf{1}_{n_1}=v\trans\mathbf{1}_{n_2}=1$. 
For simplicity of arguments, we will from now on ignore permutational similarity, and assume that either $L'=L_I$ or $L'=L_R$.

Assume $\lambda >0$. If $L_I \in \mc L_+(A)$, then the multiplicity rearrangement matrix for $(A,L_I)$ is equal to $I_2$, and if $L_R \in \mc L_+(A)$, then the multiplicity rearrangement matrix for $(A,L_R)$ is equal to $\npmatrix{1 & 1}\trans$. In the case $\lambda <0$, we can have $L_I \in \mc L_+(A)$ with the multiplicity rearrangement matrix equal to $I_2$. In this case, it is not possible to have $L_R \in \mc L_+(A)$ by Proposition \ref{prop:M(A,L)real}.

Assume now that $L_I \in \mc L_+(A)$. Since in this case, the multiplicity rearrangement matrix is equal to $I_2$, we have:
$\mc E(1,L_I)=\mc E(1,A)$, $\mc E(-1,L_I)=\mc E(\lambda,A)$, $\mc E_{\ell}(1,L_I)=\mc E_{\ell}(1,A)$, $\mc E_{\ell}(-1,L_I)=\mc E_{\ell}(\lambda,A)$. Hence, $A=B(\lambda)$, where we define:
\begin{align*}
B(\mu)&=\frac{1}{2}\left(\begin{array}{cc}
\mathbf{1}_{n_1} & \mathbf{1}_{n_1} \\
\mathbf{1}_{n_2} & -\mathbf{1}_{n_2} 
\end{array}
\right)\left(\begin{array}{cc}
1 & 0 \\
0 & \mu
\end{array}
\right)\left(\begin{array}{cc}
w\trans & v\trans \\
w\trans & -v\trans 
\end{array}
\right)\\
&=\frac{1}{2}\left(\begin{array}{cc}
\mathbf{1}_{n_1}w\trans(1+\mu) & \mathbf{1}_{n_1}v\trans(1-\mu) \\
\mathbf{1}_{n_2}w\trans(1-\mu) & \mathbf{1}_{n_2}v\trans(1+\mu) 
\end{array}
\right).
\end{align*}

Observe, $B(\mu) \in \mc S_{n}^+$ for all $\mu \in [-1,1]$. For $\lambda>0$ we have $A=B(\lambda^{1/c})^c$ and $A=B(-\lambda^{1/2c})^{2c}$  for all $c \in \N$. Furthermore, $\lim_{c \rightarrow \infty}B(\lambda^{1/c})=B(1)$ and $\lim_{c \rightarrow \infty}B(-\lambda^{1/2c})=B(-1)$. Note that $B(-1)$ is of the form $L_I$, while $B(1)$ is of the form $L_R$. Hence, in this case $\mc P_+(A)=\N$ and $\mc L_+(A)$ has at least two elements.  (Note that $\mc L_+(A)$ can also contain elements that do not have rank equal to $2$, see Example \ref{ex:rank2}.)  When $\lambda<0$, we have $A=B(-|\lambda|^{1/(2c+1)})^{2c+1}$, and $\lim_{c\rightarrow \infty}B(-|\lambda|^{1/(2c+1)})=B(-1)$. In this case we have $\mc P_+(A)=2\N+1$ and $B(-1)\in \mc L_+(A)$. 

Now assume $\lambda>0$ and $L_R \in \mc L_+(A)$. The multiplicity rearrangement matrix for $(A,L_R)$ is in equal to 
$$ M=\left(
\begin{array}{c|c}
 & 1   \\
\hline
1 & 1    \\
\lambda & 1    \\
\end{array}
\right).
$$
Hence, $\mc E(1,A)=\sspan\{\mathbf{1}_n\}$, $\mc E(\lambda,A)\subset \mc E(1,L_R)$, and $\mc E_{\ell}(1,A),\mc E_{\ell}(\lambda,A)\subset \mc E_{\ell}(1,L_R)$. 
This implies that there exists $\alpha >0$ so that $A =B(\alpha,\lambda)$, where 
\begin{align*}
B(\alpha,\mu)&:=\frac{1}{1+\alpha}\left(
\begin{array}{cc}
 \mathbf{1}_{n_1} & \mathbf{1}_{n_1} \\
  &   \\
 \mathbf{1}_{n_2} & -\alpha \mathbf{1}_{n_2} \\
\end{array}
\right) \left(
\begin{array}{cc}
1 & 0 \\
0  &  \mu  \\
\end{array}
\right)\left(
\begin{array}{cc}
 \alpha w\trans &  v\trans \\
 w\trans & -v\trans  \\
\end{array}
\right)\\
&=\frac{1}{1+\alpha} \left(
\begin{array}{cc}
  (\alpha +\mu)\mathbf{1}_{n_1} w\trans &  (1-\mu)\mathbf{1}_{n_1} v\trans \\
  \alpha(1-\mu)\mathbf{1}_{n_2}w\trans &  (1+\alpha\mu)\mathbf{1}_{n_2} v\trans  \\
\end{array}
\right).
\end{align*}
Note that $B(\mu)=B(1,\mu)$, and the case with $L_I \in \mc L_+(A)$ we considered above, is for $\lambda>0$ a special case of $B(\alpha,\mu)$. Observe: $B(\alpha,\mu) \in \mc S_2^+$ if and only if $\mu \in \left[\max\{-\alpha, -\frac{1}{\alpha}\},1\right]$. Furthermore,  $A=B(\alpha, \lambda^{1/c})^c$, $B(\alpha, \lambda^{1/c}) \in \mc S_2^+$ for all $c \in \N$, and $\lim_{c \rightarrow \infty}B(\alpha, \lambda^{1/c})=B(\alpha,1)$. We conclude, $\mc P_+(A)=\N$ and $B(\alpha, 1) \in \mc L_+(A)$. (We also have $A=B(\alpha,-\lambda^{1/2c})^{2c}$, but   $B(\alpha,-\lambda^{1/2c}) \in \mc S_2^+$ only for a finite set of $c \in \N$.)
\end{proof}

\begin{example}\label{ex:rank2}
Let $\lambda \in (-1,1)$ and $$A=\frac{1}{2} \left(
\begin{array}{cc}
  (1 +\lambda){\mathbf{1}}_{2} w\trans &  (1-\lambda)\mathbf{1}_{2} v\trans \\
  (1-\lambda)\mathbf{1}_{2} w\trans &  (1+\lambda)\mathbf{1}_{2} v\trans  \\
\end{array}
\right)\in S_4^+.$$  Note that $A$ is of the form 
\eqref{eq:matrix_rank2} for $w\trans=\frac{1}{8}\npmatrix{3 & 5}$, $v\trans=\frac{1}{2}\npmatrix{1 & 1}$ and $\alpha=1$. By Theorem \ref{thm:rank2} and its proof, we have $|\mc P_{+}(A)|=\infty$ and $\mc L_+(A)$ contains a matrix $L_I$ of the form \eqref{eq:rank2irrL}. 
Now Theorem \ref{prop:M(A,L)complex} prescribes the left and right eigenspaces of $L_I$ corresponding to the eigenvalues $1$ and $-1$. Specifically,  $\mc E_{\ell}(1,L_I)=\mc E_{\ell}(1,A)$ implies $\npmatrix{w\trans & v\trans} L_I=\npmatrix{w\trans & v\trans}$. Consequently the matrices $L_{12}$ and $L_{21}$ take the following form:
\begin{align*}
L_{12}=\left(
\begin{array}{cc}
 \beta  & 1-\beta  \\
 \frac{4}{5}-\frac{3 \beta }{5} & \frac{3 \beta }{5}+\frac{1}{5} \\
\end{array}
\right), \, L_{21}=\left(
\begin{array}{cc}
 \gamma  & 1-\gamma  \\
 \frac{3}{4}-\gamma  & \gamma +\frac{1}{4} \\
\end{array}
\right)
\end{align*}
for some $\beta \in [0,1]$ and $\gamma \in [0,\frac{3}{4}]$. In addition, $L_{12}L_{21}$ has to have eigenvalues $1$ and $0$. From $\det(L_{12}L_{21})=\frac{1}{5} (2 \beta -1) (8 \gamma -3)$
we see that either $\beta=\frac{1}{2}$ or $\gamma=\frac{3}{8}$. Equivalently, at least one of the matrices $L_{12}$, $L_{21}$ has to have rank equal to $1$. Let $L_I(\beta,\gamma)$ denote the two parametric family of matrices of the form \eqref{eq:rank2irrL} with blocks $L_{12}$ and $L_{21}$ as above.

Let $\beta=\frac{1}{2}$ and $\lambda>0$. (The case $\lambda<0$ and the case $\gamma=\frac{3}{8}$ can be considered in a similar way.) Note that $L_I\left(\frac{1}{2},\frac{3}{8}\right)$ has rank $2$, and is a limit of roots of $A$ of the form $B(\mu)$ as given in the proof of Theorem \ref{thm:rank2}. To show that all matrices $L_I(\frac{1}{2}, \gamma)$, $\gamma \in [0,\frac{3}{4}]$, belong to $\mc L_+(A)$, we will construct a sequence of matrices in $\mc R_+(A)$ that converges to $L_I(\frac{1}{2}, \gamma)$.  We start by finding an invertible matrix  
$$T(\gamma)=\left(
\begin{array}{cccc}
 1 & 1 & 0 & -\frac{5}{8 \gamma -3} \\
 1 & 1 & 0 & \frac{3}{8 \gamma -3} \\
 1 & -1 & -1 & 0 \\
 1 & -1 & 1 & 0 \\
\end{array}
\right)$$
that satisfies $$(T(\gamma))^{-1}L_I\left(\frac{1}{2}, \gamma\right)T(\gamma)=\left(
\begin{array}{cccc}
 1 & 0 & 0 & 0 \\
 0 & -1 & 0 & 0 \\
 0 & 0 & 0 & 1 \\
 0 & 0 & 0 & 0 \\
\end{array}
\right).$$ 
Let $N_c$, $c \in \N$, be a sequence of  $2 \times 2$ nilpotent matrices, that converges to $J_2(0):=\left(\begin{array}{cc}
   0  & 1 \\
   0  & 0
\end{array}\right)$, and let 
$$B_c(\gamma):=T(\gamma)\left(\left(
\begin{array}{cc}
 1 & 0 \\
 0 & -\lambda^{\frac{1}{2 c}}  \\
\end{array}
\right)
\oplus N_c \right)T(\gamma)^{-1}.$$

We have $B_c^{2 c}=A$ and $\lim_{c \rightarrow \infty}B_c=L_I(\frac{1}{2}, \gamma)$. We still need to choose $N_c$ so that $B_c(\gamma) \in \mc S_4^+$. This can be done in several ways, for example, the following choice of $N_c$ is independent of $\gamma$:
$$N_c:= \frac{(1+\lambda^{\frac{1}{2 c}})}{2}\left(
\begin{array}{cc}
 0 & 1 \\
 0 &  0 \\
\end{array}
\right)$$ and 
results in 
$$B_c(\gamma)=\frac{1}{2}\left(
\begin{array}{cc}
  \left(1-\lambda ^{\frac{1}{2 c}}\right){\mathbf{1}}_{2} w\trans &  \left(1+\lambda ^{\frac{1}{2 c}}\right)\mathbf{1}_{2} v\trans \\\\
  \left(1+\lambda ^{\frac{1}{2 c}}\right)B'(\gamma) &  \left(1-\lambda ^{\frac{1}{2 c}}\right)\mathbf{1}_{2} v\trans  \\
\end{array}
\right),$$ where $B'(\gamma)=\left(
\begin{array}{cc}
 \gamma & 1-\gamma \\
 \frac{3}{4}-\gamma &  \frac{1}{4}+\gamma \\
\end{array}
\right)$.

With this example, we have shown that $\mc L_+(A)$ can contain infinitely many elements. 
\end{example}

The research on arbitrarily finely divisible matrices initiated in this paper can be extended in several directions. For instance, investigations of reducible stochastic matrices and higher-order circulant matrices would be natural continuations of this work. Although not explored in this paper, we believe that the class of arbitrarily finely divisible matrices  has significance in applications, that would provide a motivation for algorithmic approaches to the topic. Once such direction would be finding algorithms that approximate a given stochastic matrix with an arbitrarily finely divisible matrix.

\section*{Acknowledgements}
This publication has emanated from research supported in part by a grant from Science Foundation Ireland under Grant number 18/CRT/6049. For the purpose of Open Access, the author has applied a CC BY public copyright license to any Author Accepted Manuscript version arising from this submission. 

\bibliographystyle{unsrt}
\bibliography{references}

\end{document}